\newenvironment{enumeratei}{\begin{enumerate}[\upshape (i)]}{\end{enumerate}}
\numberwithin{equation}{section}
\theoremstyle{plain}
 \newtheorem{theorem}{Theorem}[section]
 \newtheorem{lemma}[theorem]{Lemma}
 \newtheorem{proposition}[theorem]{Proposition}
 \newtheorem{corollary}[theorem]{Corollary}
\theoremstyle{definition}
 \newtheorem{remark}[theorem]{Remark}
 \newtheorem{claim}[theorem]{Claim}
 \newtheorem{example}[theorem]{Example}
\newcommand \url [1] {\tt{#1}}
\newcommand \geom [1] {\textup{Geom}(#1)} 
\newcommand \geomd [1]{\pair {\Jir{#1}}{\set{\Jir L\cap \ideal x: x\in L}}} 
\newcommand\ideal[1]{\mathord\downarrow #1}
\newcommand\filter[1]{\mathord\uparrow #1}
\newcommand \rhullo [1] {\textup{ConvH}_{\mathbb R^{#1}}} 
\newcommand \phullo [2]  {\textup{Hull}^{(#1)}_{#2}}  
\newcommand \phulla [3] {\phullo{#1}{#2}(#3)}  
\newcommand \chullo [1]  {\textup{Hull}^{\textup o}_{#1}}
\newcommand \chulla [2] {\chullo{#1}(#2)} 
\newcommand \powset [1] {\textup{PowSet}(#1)}
\newcommand \tuple [1] {\langle #1\rangle}
\newcommand \pair [2] {\tuple{#1,#2}}
\renewcommand\phi{\varphi}
\renewcommand\emptyset{\varnothing}
\newcommand \tbf [1] {\textbf{#1}} 
\newcommand \lat [1] {\textup{Lat}#1}    
\newcommand \set[1] {\{#1\}}
\newcommand \bigset[1] {\bigl\{#1\bigr\}}
\newcommand \rad [1] {\textup{rad}(#1)}
\newcommand \lmpt [1] {\textup{LPt}(#1)}
\newcommand \rmpt [1] {\textup{RPt}(#1)}
\newcommand \lend [1] {\textup{LEnd}(#1)}
\newcommand \rend [1] {\textup{REnd}\bigl(#1\bigr)}
\newcommand \points [1] {{^{\textup{ps}}\kern-2.5pt{#1}}}
\newcommand \pointss [1] {{^{\textup{ps}}\kern-1pt{#1}}}
\newcommand\fromto [3] { \textup{HInt}_{#1}(#2,#3)} 
\newcommand \lowstar [1] {{#1}_\ast}
\newcommand \width [1] {\textup{width}(#1)}
\newcommand \cprop [1]{$(\textup{CC}_{#1})$}
\newcommand \ephi {\widehat\phi} 
\newcommand \epsi {\widehat\psi} 
\newcommand \Jir [1] {\textup{Ji}\,#1} 
\newcommand \Mir [1] {\textup{Mi}\,#1}
\renewcommand\phi{\varphi}
\newcommand \Jleft {J_{\textup{left}}} 
\newcommand \Jbelow {J_{\textup{below}}}
\newcommand \then {\mathrel{\Rightarrow}} 
\newcommand \scsc[1] {\scriptscriptstyle{#1}}
\newcommand \izom [1]{\mathbf I\,#1}
\newcommand \kccc { \mathcal K_{\scsc{\textup{collinear}}}^{\scsc{\textup{concave}}} }
\newcommand \kccol { \mathcal K_{\scsc{\textup{collinear}}} }
\newcommand \kcplan { \mathcal K_{\scsc{\textup{planar}}} }
\newcommand \prel[1] { \mathcal R_{\scsc{\textup{points}}}^{\scsc{\textup{dim=}#1}}}
\newcommand \kcall { \mathcal G_{\scsc{\textup{all}}} }
\newcommand \nonparallel {\mathrel{
\not\mathord{\kern -1.5 pt\parallel}}}
\newcommand\init [1] {} 
\newcommand\nothing [1] {}
\newcommand\red[1]{{\textcolor{red}{#1}}}
\newcommand\blue[1]{{\textcolor{blue}{#1}}}
\newcommand\green[1]{{\textcolor{green}{#1}}}
\newcommand\skc[1]{{\textcolor{magenta}{#1}}}
\newcommand\refket[1] {\red{#1}}
\newcommand\refegy[1] {\blue{#1}}
\newcommand\refboth[1]{\green{1}}
\begin{document}
\title[Finite convex geometries of circles]
{Finite convex geometries of circles}

\author[G.\ Cz\'edli]{G\'abor Cz\'edli}
\email{czedli@math.u-szeged.hu}
\urladdr{http://www.math.u-szeged.hu/$\sim$czedli/}
\address{University of Szeged, Bolyai Institute. 
Szeged, Aradi v\'ertan\'uk tere 1, HUNGARY 6720}

\thanks{This research was supported by the NFSR of Hungary (OTKA), grant numbers  K77432 and
K83219}


\subjclass[2010]{Primary 05E99;   secondary 06C10} 
\nothing{
05E99 Algebraic combinatorics (1991-now) None of the above, but in this section;
06C10 (1980-now) Semimodular lattices, geometric lattices}

\keywords{convex geometry, anti-exchange property, geometry of circles, lower semimodular lattice, planar lattice}

\date{December 14, 2012; revised May 20, 2013}

\begin{abstract} 
Let $F$ be a finite set of  circles in the plane. We point out that the usual convex closure restricted to $F$ yields a convex geometry, that is, a combinatorial structure introduced by P.\,H.\ Edelman in 1980 under the name ``anti-exchange closure system''. We prove that if the circles are collinear and they are  arranged in a ``concave  way'', then they determine a convex geometry of convex dimension at most 2, and each finite convex geometry of convex dimension at most 2 can be represented this way. 
The proof uses some recent results from Lattice Theory, and some of the auxiliary statements on lattices or convex geometries could be of separate interest. The paper is concluded with some open problems.
\end{abstract}

\maketitle

\section{Introduction}\label{introsection}
\subsection{Aim and motivation}
\marginpar{All changes after December 14, 2012, are  in 
\skc{magenta}, \refegy{blue}, or \refket{red}.} 
The concept of convex geometries was introduced by \init{P.\,H.\ }Edelman~\cite{edelman} and \cite{edelmanproc}, see also \init{P.\,H.\ }Edelman and \init{R.\,E. }Jamison~\cite{edeljam},  \init{K.\ }Adaricheva, \init{V.\,A.\ }Gorbunov, and \init{V.\,I.\ }Tumanov \cite{r:adarichevaetal}, and  \init{D.\ }Armstrong \cite{armstrong}. \emph{Convex geometries} are combinatorial structures: finite sets with anti-exchange closures such that the emptyset is closed. 
They are equivalent to antimatroids, which are particular greedoids, and also to meet-distributive lattices. 
Actually, the concept of convex geometries has many equivalent variants. The first of these variants is due to \init{R.\,P.\ }Dilworth \cite{r:dilworth40}, and the early ones were surveyed in \init{B.\ }Monjardet \cite{monjardet}.  Since it would wander to far if we overviewed the rest, more than a dozen approaches, we only mention \init{K.\ }Adaricheva \cite{adaricheva},
\init{H.\ }Abels \cite{abels}, \init{N.\ }Caspard and \init{B.\ }Monjardet \cite{caspardmonjardet}, \init{S.\,P.\ }Avann \cite{avann}, 
\init{R.\,E.\ }Jamison-Waldner  \cite{jamisonwaldner}, and \init{M.\ }Ward \cite{ward}  for additional sources, and \init{M.\ }Stern\ \cite{stern}, 
\init{K.\ }Adaricheva and \init{G.\ }Cz\'edli \cite{adarichevaczg}, and 
\init{G.\ }Cz\'edli \cite{czgcoord} for some recent overviews. However, we need only a small part of the theory of convex geometries, and the present paper is intended to be self-contained for 
those who know the rudiments of Lattice Theory  up to, say, the concept of semimodularity.

From combinatorial point of view, the finite convex geometries are the interesting ones. Hence, and also because the tools we  use are elaborated only for the finite case, the present paper is restricted to \emph{finite} convex geometries.
Postponing the exact definition of convex geometries to Section~\ref{secketto}, we present an important finite  example as follows. Let $n\in \mathbb N=\set{1,2,3,\dots}$, and let $E$ be a finite subset of the $n$-dimensional space $\mathbb R^n$. The set of all subsets of $E$ is denoted by $\powset E$. For $Y\subseteq E$, we define $\phulla nEY=E\cap\rhullo n(Y)$, where $\rhullo n(Y)$ denotes the usual convex hull of $Y$ in $\mathbb R^n$. \skc{The} map $\phullo nE\colon \powset E\to\powset E$ is a closure operator, and $\pair E{\phullo nE}$ is a finite convex geometry. 
Convex geometries of this form are called \emph{geometries of relatively convex sets}, and they (not only the finite ones) were  studied by 
\init{K.\ }Adaricheva \cite{adarichevaproc} and \cite{adaricheva},  \init{G.\,M.\ }Bergman \cite{bergman}, and \init{A.\,P.\ }Huhn \cite{huhn}. 

We know from \init{G.\,M.\ }Bergman \cite{bergman} that each finite convex geometry $G$ can be \skc{\emph{embedded}} into a geometry $\pair E{\phullo nE}$ of relative convex sets; $n$ depends on $G$. 
However, even if a finite convex geometry is of convex dimension 2, it is not necessarily isomorphic to some $\pair E{\phullo nE}$.

\skc{Besides geometries of relatively convex sets, there exists a more complicated way to define a convex geometry on a subset $B\subseteq \mathbb R^n$ by means of the 
 usual convex hull operator $\rhullo n$.
For definition, let $B$ and $A$ be finite subsets, acting as a base set and an auxiliary set,  of $\mathbb R^n$ such that $\rhullo n(A)\cap B=\emptyset$. For  $X\subseteq B$, let $\phullo {n,A}B(X)=B\cap\rhullo n(X\cup A)$.
By \refket{Kenji Kashiwabara, Masataka Nakamura and Yoshio Okamoto~\cite{nakamuraatall}}, 
$\pair B{\phullo {n,A}B}$ is a convex geometry and, moreover,  each finite convex geometry is \emph{isomorphic} to an appropriate $\pair B{\phullo {n,A}B}$.}

\refegy{Motivated by the results of \cite{bergman} and \cite{nakamuraatall} mentioned above, the present paper  introduces another kind of ``concrete'' finite convex geometries  that are still  based on the usual concept $\rhullo n$ of convexity.
However, our primary purpose is to represent finite convex geometries in a \emph{visual, conceptually simple} way. In particular, we look for a representation theorem that leads to readable figures, at least in case of small size, because figures are generally useful in  understanding a subject. (This is well exemplified by the role that Hasse diagrams play, even if the present paper cannot compete with their importance.)  The space $\mathbb R^n$ for $n\geq 3$ can hardly offer comprehendible figures. The real line $\mathbb R^1=\mathbb R$ is too ``narrow'' to hold overlapping objects in a readable way, and only few convex geometries can be represented by it. Therefore, with the exception of Subsection~\ref{kirasection}, we will only work in the plane $\mathbb R^2$. The plane is  general enough to represent all finite convex geometries  of convex dimension 2, and also some additional ones.} 

\skc{To accomplish our goal,} we start from a finite set $F$ of circles in the plane $\mathbb R^2$, and we define a convex geometry $\pair F{\chullo F}$ with the help of forming usual convex hulls in the plane, analogously to the geometries of relative convex sets. The structures $\pair F{\chullo F}$, called \emph{convex geometries of circles}, are very close to the usual closure $\rhullo 2\colon \mathbb R^2\to\mathbb R^2$ and, as opposed to geometries of relatively convex sets, we can prove  that each finite  convex geometry of convex dimension at most 2 is isomorphic to some convex geometry $\pair F{\chullo F}$ of circles. Actually, our representation theorem, the main result of the paper, will assert more by imposing some conditions on $F$; see Figure~\ref{fig1} (without $C_3'$, $D$, the grey-colored plane shape $H$, and the dotted curves) for a first impression.

\begin{figure}
\centerline
{\includegraphics[scale=1.0]{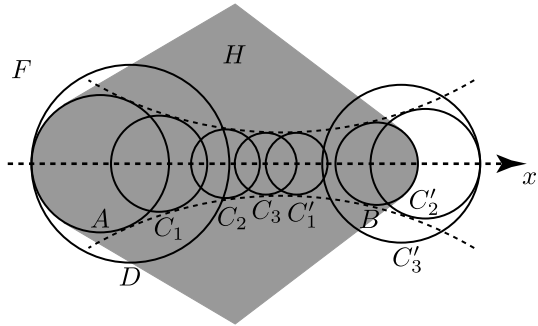}}
\caption{A concave set of collinear circles (disregard the grey-colored shape $H$ until the proof of Lemma~\ref{fromtolemma})
\label{fig1}}
\end{figure}

This paper uses some lattices as auxiliary tools in proving the main result. In fact, the  theory of slim semimodular lattices has rapidly developed recently, as witnessed by 
\init{G.\ }Cz\'edli \cite{czgmatrix}, \cite{czgrepres}, and \cite{czgasympt}, 
\init{G.\ }Cz\'edli, \init{T.\ }D\'ek\'any,  \init{L.\ }Ozsv\'art, \init{N.\ }Szak\'acs, and \init{B.\ }Udvari \cite{czgdekanyatall}, 
\init{G.\ }Cz\'edli and \init{G.\ }Gr\"atzer \cite{czgggresect}, 
\init{G.\ }Cz\'edli, {L.\ }Ozsv\'art, and  \init{B.\ }Udvari \cite{czgolub},
\init{G.\ }Cz\'edli and \init{E.\,T.\ }Schmidt \cite{czgschtJH}, \cite{czgschvisual}, \cite{czgschperm}, and \cite{czgschslim2}, 
\init{G.\ }Gr\"atzer and \init{E.\ }Knapp \cite{gratzerknapp}, \cite{gratzerknapp3}, and \cite{gratzerknapp4}, and 
\init{E.\,T.\ }Schmidt \cite{schmidtrepres}.
Even if only a part of the results in these papers are eventually needed here, this progress  provides the background of the present work.  On the other hand, some auxiliary statements we prove here, namely some of the propositions, seem to be of some interest in the theory of slim semimodular lattices, while some other propositions could be interesting in the  theory of convex geometries.

\subsection{Outline}Section~\ref{secketto} gives the basic concepts and formulates two results. Proposition~\ref{prop1} asserts that, for every finite set $F$ of circles in the \refegy{plane},  $\pair F{\chullo F}$  is a convex geometry, while our main result, Theorem~\ref{thmmain}, is the converse statement for the case of convex dimension $\leq 2$. The results of Section~\ref{secketto} are proved in Section~\ref{proofsection}, where several auxiliary statements are cited or proved, and some more concepts are recalled. 
Section~\ref{oddsendch} contains examples, statements, and open problems to indicate that although  Theorem~\ref{thmmain} gives a satisfactory representation of  finite convex geometries of convex dimension at most 2, we are far \skc{both} from settling the case of higher convex dimensions \skc{and from understanding what the abstract class of our convex geometries of circles is}. 
%

\subsection{Prerequisites} As mentioned already, the reader is only assumed a little knowledge of lattices. Besides the first few pages of any book on lattices or particular lattices, including \init{G.\ }Gr\"atzer \cite{GGLT}, \init{J.\,B.\ }Nation \cite{nationbook}, and \init{M.\ }Stern \cite{stern}, even the first chapter of 
\init{S.\ }Burris and \init{H.\,P.\ }Sankappanavar \cite{burrissankapp}, which does not even focus on Lattice Theory, is sufficient. Note that \cite{burrissankapp} and \cite{nationbook} are freely downloadable.

\section{Convex geometries and our results}\label{secketto}
\subsection{Basic concepts and the first result}
Assume that we are given a set $U$ and a map $\Phi\colon \powset U\to \powset U$. 
If $X\subseteq \Phi(X)=\Phi(\Phi(X))\subseteq \Phi(Y)$ holds for all $X\subseteq Y\subseteq U$, then $\Phi$ is a \emph{closure operator} on $U$. If $\Phi$ is a closure operator on $U$, $\Phi(\emptyset)=\emptyset$, and $\Phi$ satisfies the so-called \emph{anti-exchange property}
\begin{equation}\label{aeprogor}
\begin{aligned}
&\text{if $\Phi(X)=X\in \powset U$, $x,y\in U\setminus X$, $x\neq y$,}
\cr
&\text{and $x\in \Phi(X\cup\set y)$, then $y\notin \Phi(X\cup\set x)$,}
\end{aligned}
\end{equation}
then $\pair U\Phi$ is a \emph{convex geometry}. 
Given a convex geometry $\pair U\Phi$, we use the notation  
\[\lat{\pair U\Phi}=\set{X\in\powset U: X=\Phi(X)}
\] 
to denote the \emph{set of closed sets} of $\pair U\Phi$. Actually, the structure $\lat{\pair U\Phi}=\bigl\langle \lat{\pair U\Phi}, \subseteq \bigr\rangle$ is a \emph{lattice}, and it is a complete meet-subsemilattice of the powerset lattice $\powset U=\pair{\powset U}{\cup,\cap}$.  It is well-known that $\lat{\pair U\Phi}$ determines $\pair U\Phi$ since we have $\Phi(X)=\bigcap\set{Y\in \lat{\pair U\Phi}: X\subseteq Y}$. Hence, it is natural to say that $\pair U\Phi$ can be \emph{embedded} into or \emph{isomorphic} to a convex geometry $\pair V\Psi$, if the lattice  $\lat{\pair U\Phi}$  can be embedded into or isomorphic \refket{to} $\lat{\pair V\Psi}$, respectively. 

As usual,  a \emph{circle} is a set $\set{\pair xy\in \mathbb R^2 : (x-u)^2+(y-v)^2=r }$, where $u,v,r\in\mathbb R$ and $r\geq 0$. A circle of radius 0 consists of a single point. Since $\rhullo 2$, the operator of forming convex hulls, is defined for subsets of $\mathbb R^2$ rather than for sets of circles, we introduce a shorthand notation for ``\emph point\emph s of'' (or ``\emph point \emph set of'') as follows. 
For a set $X$ of circles in $\mathbb R^2$, the \emph{set of points} belonging to some member of $X$ is denoted by $\points X$. In other words, 
\begin{equation}
\points X=\bigcup_{C\in X}C\text.\label{psxdFa}
\end{equation} 
\skc{For a set $F$ of circles in $\mathbb R^2$ and $X\subseteq F$,} we define 
\begin{equation}
\chulla FX= \bigset{C\in F: C\subseteq \rhullo 2{\bigl( \points X \bigr)  }  }\text.\label{psxdFb}
\end{equation}
(The superscript circle in the notation will remind us that $\chullo F$ is defined on a set of circles.)
The structure $\pair F{\chullo F}$ will be called the \emph{convex geometry} of $F$, and we call it  a \emph{convex geometry of circles} if $F$ is not specified. (We shall soon prove that it is a convex geometry.)  Note that if all circles of $F$ are of radius 0 and $E\subseteq \mathbb R^2$ is the set of their centers, then  $\pair F{\chullo F}$   is obviously isomorphic to $\pair E{\phullo 2E}$. 

We are now in the position to state our first observation; the  statements of this section will be proved in Section~\ref{proofsection}.

\begin{proposition}\label{prop1}
For every finite set $F$ of circles in $\mathbb R^2$,   $\pair F{\chullo F}$  is a convex geometry.
\end{proposition}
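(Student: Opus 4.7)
The plan is to verify the three defining properties of a convex geometry. That $\chullo F$ is a closure operator and that $\chullo F(\emptyset)=\emptyset$ are routine: extensivity and monotonicity follow from $X\subseteq Y\Rightarrow\points X\subseteq\points Y$; idempotency follows from $\points{\chulla FX}\subseteq\rhullo 2(\points X)$ together with the idempotency of $\rhullo 2$; and $\chullo F(\emptyset)=\emptyset$ holds because $\rhullo 2(\emptyset)=\emptyset$ while every circle in the sense of this paper is nonempty.

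The substantive point is the anti-exchange property \eqref{aeprogor}. I argue by contradiction: assume $X=\chulla FX$, let $C_1,C_2\in F\setminus X$ be distinct, and suppose both $C_1\subseteq\rhullo 2(\points X\cup C_2)$ and $C_2\subseteq\rhullo 2(\points X\cup C_1)$ hold. Put $K:=\rhullo 2(\points X)$ and let $D_i$ denote the closed disk bounded by $C_i$. Since $C_i\notin X=\chulla FX$, we have $C_i\not\subseteq K$ and hence $D_i\not\subseteq K$. Taking convex hulls in the two inclusions gives $D_i\subseteq\rhullo 2(K\cup D_j)$ for $\{i,j\}=\{1,2\}$, so the compact convex set $H:=\rhullo 2(K\cup D_1)=\rhullo 2(K\cup D_2)$ strictly contains $K$. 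I now aim to derive a contradiction by describing $\partial H$ in two different ways.

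From $H=\rhullo 2(K\cup D_1)$ together with $D_1\not\subseteq K$, a nontrivial open arc $A$ of $C_1$ must lie on $\partial H$: pick a unit vector $v$ for which some point of $D_1\setminus K$ has strictly larger inner product with $v$ than any point of $K$; then for every $v'$ in a small open neighborhood of directions, the point of $K\cup D_1$ maximizing the inner product with $v'$ is the unique point $o_1+r_1v'$ of $C_1$ (where $o_1,r_1$ are the center and radius of $C_1$), and these exposed points of $H$ sweep out an open arc of $C_1$ inside $\partial H$. On the other hand, from $H=\rhullo 2(K\cup D_2)$ the boundary $\partial H$ decomposes into finitely many subarcs of $\partial K$, subarcs of $C_2$, and common tangent line segments of $\partial K$ and $C_2$. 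Since $C_1\neq C_2$, the set $A\cap C_2$ has at most two points; $A$ meets each line in at most two points; and since $\partial K$ is a finite union of arcs of circles from $X$ (each distinct from $C_1$) together with tangent segments, $A\cap\partial K$ is also finite. Hence the uncountable arc $A$ would lie in a finite set, a contradiction. The main obstacle is this boundary analysis, which relies on standard but somewhat delicate facts about convex hulls of two planar compact convex sets; the degenerate case where both $C_i$ are single points (so no nontrivial arc can be produced) must be disposed of separately by the classical anti-exchange property for finite point configurations in $\mathbb R^2$.
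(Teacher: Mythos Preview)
Your argument is correct and follows essentially the same route as the paper: after the routine closure-operator checks, both proofs verify anti-exchange by showing that the boundary of the common hull $H=\rhullo 2(K\cup D_1)=\rhullo 2(K\cup D_2)$ must contain a genuine circular arc of $C_i$ outside $K$, and then use that two distinct circles (or a circle and a line) meet in at most two points. The only cosmetic differences are that the paper works with the equivalent reformulation \eqref{aepropnw} and gives one explicit decomposition of $\partial H$ serving both indices simultaneously, whereas you locate the arc via exposed points, compare it against the decomposition coming from the other circle, and defer the radius-zero case to the classical anti-exchange property for points.
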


\subsection{Collinear circles and the main result}\label{colsmss}
If there is a line containing the centers of all members of $F$ above, then $F$ is a \emph{set of collinear circles}. 
For simplicity, we will always assume that the line in question is the $x$ axis. That is, in case of a set of collinear circles, all the centers are of the form $\pair u0$. 
For example, $F$ in Figure~\ref{fig1}
is a set of collinear circles; note that the dotted curves and the $x$-axis do not belong to $F$.  \refegy{
Note that a set of collinear circles can always be given by a set of intervals of the real line $\mathbb R$; this comment will be expanded in Subsection~\ref{kirasection}. However, circles lead to a stronger result and more readable figures than intervals.}
The label of a circle in our  figures is either below the center  (inside or outside but close to the circle), or we use an arrow. 
The \emph{radius} of a circle $C$ is denoted by $\rad C$. If the center  of $C$ is  $\pair u0$, then 
\[ \lmpt C=u-\rad C\,\,\text{ and }\,\,
\rmpt C=u+\rad C
\]
will denote the \emph{leftmost point} and the \emph{rightmost point} of $C$, respectively. Since we allow that two distinct circles have the same leftmost point or the same rightmost point, we also need the following concept. 
Although circles are usually treated as endless figures, in case the  center of a circle $C$ lies on the $x$ axis,   we define the \emph{left end} and the \emph{right end} of $C$ as follows:
\[\lend C=\pair{\lmpt C}{-\rad C}
\,\,\text{ and }\,\, \rend C=\pair{\rmpt C}{\rad C}\text.
\]
Left and right ends are ordered lexicographically; this order is denoted by $\sqsubset$. Thus
{\allowdisplaybreaks{
\begin{align*}
\lend{C}\sqsubset\lend{D}\kern4pt \iff \kern4pt &\lmpt C<\lmpt D, \text{ or}\cr
 &\lmpt C=\lmpt D\text{ and } 
 \rad C>\rad D,\cr
\rend{C}\sqsubset\rend{D}\kern4pt \iff \kern4pt &\rmpt C<\rmpt D, \text{ or}\cr
 &\rmpt C=\rmpt D\text{ and } 
 \rad C<\rad D\text.
\end{align*}
}}%
For later reference, note the obvious rules:
\begin{equation}\label{obvruuules}
\begin{aligned}
 \lmpt C<\lmpt D&\then \lend C\sqsubset\lend D\text{ and }\cr
\rmpt C<\rmpt D&\then \rend C\sqsubset\rend D\text.
\end{aligned}
\end{equation}
Let $F$ be a set  of collinear circles. 
We say that $F$ is a \emph{concave set of collinear circles} if for all $C_1,C_2,C_3\in F$,
\begin{equation}\label{efbetween}
\begin{aligned}
&\text{whenever 
$\lend{C_1} \sqsubset \lend{C_2}$ and $\rend{C_2} \sqsubset \rend{C_3}$,}\cr
&\text{then $C_2\subseteq \rhullo2\bigl(  {C_1}\cup {C_3} \bigr)$\text.
}
\end{aligned}
\end{equation}
For illustration, 
see $C_1$, $C_2$, and $C_3$, or $C_1'$, $C_2'$, and $C_3'$, in  Figure~\ref{fig1}. Note that since
each $C\in F$ is uniquely determined by $\lend C$ and also by $\rend C$, 
``$\sqsubset$'' in \eqref{efbetween} can  be replaced by ``$\sqsubseteq$''.
If $|\set{\lmpt{C_1},\lmpt{C_2} ,\rmpt{C_1},\rmpt{C_2}}|=4$
for any two distinct $C_1$ and $C_2$ in a set $F$ of collinear circles, then $F$ is called \emph{separated}. For example, $F\setminus\set{C_3', D}$ in Figure~\ref{fig1} is a separated, concave set of collinear circles.
Clearly, if $F$ is a separated set of collinear circles, then $F$ is concave if{f}
\begin{equation}\label{efptetween}
\begin{aligned}
&\text{whenever 
$\lmpt{C_1} < \lmpt{C_2}$ and $\rmpt{C_2} < \rmpt{C_3}$,}\cr
&\text{then $C_2\subseteq \rhullo2\bigl(  {C_1}\cup {C_3} \bigr)$.
}
\end{aligned}
\end{equation}

For a finite lattice $L$, the set of elements with exactly one lower cover, that is the set of non-zero join-irreducible elements, is denoted by $\Jir L$. Dually, $\Mir L$ stands for the set of elements with exactly one cover. The \emph{convex dimension} of a finite convex geometry $\pair U\Phi$ is the least integer $n$ such that  $\Mir{\!\bigl(\lat{\pair U\Phi}\bigr)}$ is the union of $n$ chains.  In other words, the convex dimension is the \emph{width} of the poset $\Mir{\!\bigl(\lat{\pair U\Phi}\bigr)}$. 

We are now in the position of formulating our main result, which characterizes finite convex geometries of convex dimension at most 2 
as the convex geometries of fini\skc{te} (separated or not necessarily separated) concave 
sets of collinear circles. 
\goodbreak

\begin{theorem}\label{thmmain}\  
\begin{itemize}
\item[{\textup(A)}]
If $F$ is a finite, concave set of  collinear circles in the plane, then $\pair F{\chullo F}$ is a convex geometry of convex dimension at most 2.
\item[{\textup(B)}] For each finite convex geometry $\pair U\Phi$ of convex dimension at most 2, there exists a finite,   separated, concave set $F$ of  collinear circles in the plane such that $\pair U\Phi$ is isomorphic to $\pair F{\chullo F}$.
\end{itemize}
\end{theorem}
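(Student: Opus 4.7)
The plan is to prove (A) by direct analysis of the meet-irreducibles of $\lat{\pair F{\chullo F}}$, and to prove (B) via duality with slim semimodular lattices together with an inductive geometric construction.

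For (A), Proposition~\ref{prop1} already supplies the convex geometry, so only the convex dimension bound is new. Write $L=\lat{\pair F{\chullo F}}$. I would first show that, thanks to \eqref{efbetween}, a nonempty $X\subseteq F$ is closed if and only if every $C\in F$ with $\lend C$ and $\rend C$ both $\sqsubset$-bracketed by the $\sqsubset$-minimum of $\{\lend{C'}:C'\in X\}$ and the $\sqsubset$-maximum of $\{\rend{C'}:C'\in X\}$ already belongs to $X$. If $M\in L$ is meet-irreducible and $M\cup\{x\}$ is its unique cover, this characterization forces $x$ to extend the range strictly to the left (i.e., $\lend x$ is $\sqsubset$-smaller than everything in $\{\lend{C'}:C'\in M\}$) or strictly to the right; otherwise $x$ would itself be bracketed, hence would already lie in $M$. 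Call such $M$ \emph{left-type} or \emph{right-type} accordingly. I would then verify that the right-type meet-irreducibles form a $\subseteq$-chain by showing that their successive right-extensions are nested, with a dual argument for the left-type ones. Assigning each meet-irreducible to one of these classes (freely, when both assignments are allowed) exhibits $\Mir L$ as a union of two chains, so $\width{\Mir L}\leq 2$.

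For (B), set $L=\lat{\pair U\Phi}$. Meet-distributivity together with convex dimension at most $2$ means that the order dual of $L$ is a slim upper semimodular lattice, in the sense that its join-irreducibles form a union of two chains. The plan is to invoke the structure theory of slim semimodular lattices developed in the Cz\'edli--Schmidt line of papers cited in the introduction---in particular their planar diagrams and trajectory machinery---to obtain a planar picture of $L$ in which the two chains of $\Mir L$ appear as a ``left boundary'' and a ``right boundary''. Each element of $U$ then acquires two coordinates, one per chain, recording its position relative to each boundary. I would next translate these two coordinate sequences into a set $F$ of circles by placing, for each element of $U$, a circle on the $x$-axis whose leftmost point is strictly increasing in the first coordinate and whose rightmost point is strictly increasing in the second. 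After choosing the $x$-coordinates on a sufficiently convex profile and tuning the radii so that \eqref{efbetween} holds and no two endpoints coincide, the isomorphism $\pair U\Phi\cong\pair F{\chullo F}$ follows by matching closed sets via the characterization established in (A).

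The main obstacle is the geometric realization in (B). Two essentially independent linear orders on the same finite set must be realized as the $\sqsubset$-orders of left and right ends of actual circles in a way that also satisfies the concavity condition \eqref{efbetween}; this condition tightly constrains each radius relative to those of the surrounding circles, so not every pair of orders can be realized concavely. I anticipate that the auxiliary propositions promised for Section~\ref{proofsection} will handle this by an inductive placement of circles---adding them from the outermost inward while simultaneously maintaining both prescribed orders and concavity at every stage---thereby converting the combinatorial data of the slim semimodular lattice into explicit geometric coordinates.
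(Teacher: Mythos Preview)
Your plan for (A) is correct and essentially the paper's argument in different clothing. The paper shows (Lemma~\ref{mirlemma}) that every nonempty closed set equals $\fromto F{K_l}B\cap\fromto FA{K_r}$, so $\Mir L$ lies in the union of the two explicit chains $\{\fromto F{K_l}B:B\in F\}$ and $\{\fromto FA{K_r}:A\in F\}$; your left-type/right-type dichotomy recovers precisely these chains.

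For (B) your outline points the right way but leaves two decisive ingredients unidentified. First, the paper does not match closed sets directly. It proves a reduction (Lemma~\ref{lMlStm}): a bijection $\psi\colon\Jir L\to F$ that respects both the order on $\Jir L$ and the planar ``left of'' relation already forces $L\cong\lat{\pair F{\chullo F}}$. The engine here is Carath\'eodory's condition \cprop 2 (Proposition~\ref{cpropos}) plus Proposition~\ref{jirmaprop}, together with a characterization of $a\leq b\vee c$ for join-irreducibles via ``horizontally between'' in the planar diagram (Proposition~\ref{propokoztes}) and Proposition~\ref{prszdobzF} for the comparable case. This reduces the isomorphism check to two-element joins; your ``match closed sets via (A)'' skips the reduction, and without it the verification is not routine. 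Second, the induction is lattice-theoretic, not geometric. Proposition~\ref{dirronbtop} locates a maximal doubly irreducible $c$ on the boundary whose filter $\filter c$ is \emph{prime}, so $L'=L\setminus\filter c$ is again a dually slim lower semimodular lattice with $\Jir L=\Jir{L'}\cup\{c\}$; one applies the hypothesis to $L'$ and then appends a single new circle $C$ with $\rmpt C$ larger than every existing rightmost point and $\lmpt C$ separating $\psi'(\Jbelow)$ from $\psi'(\Jleft)$. Concavity is then secured by the elementary observation that enlarging $\rmpt C$ while fixing $\lmpt C$ flattens the relevant arc (Figure~\ref{fig4}). Your ``outermost inward'' scheme does not supply the analogue of the prime-filter step, and without it there is no guarantee that removing a circle leaves a lattice in the same class, so the induction may not go through.
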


\section{Proofs and auxiliary statements}\label{proofsection}
\subsection{Not necessarily collinear circles}
\begin{proof}[Proof of Proposition~\ref{prop1}] 
For every $X\subseteq F$, we have
\begin{equation}\label{dblrzlsr}
\text{$\rhullo 2(\points X)=\rhullo 2{\bigl(\pointss{( \chulla FX)} \bigr)}$}
\end{equation}
since the ``$\subseteq$'' inclusion follows from $X\subseteq \chulla FX$  while the converse inclusion comes from the obvious $ \rhullo 2(\points X) \supseteq\pointss(\chulla FX)$.

\begin{figure}
\centerline
{\includegraphics[scale=1.0]{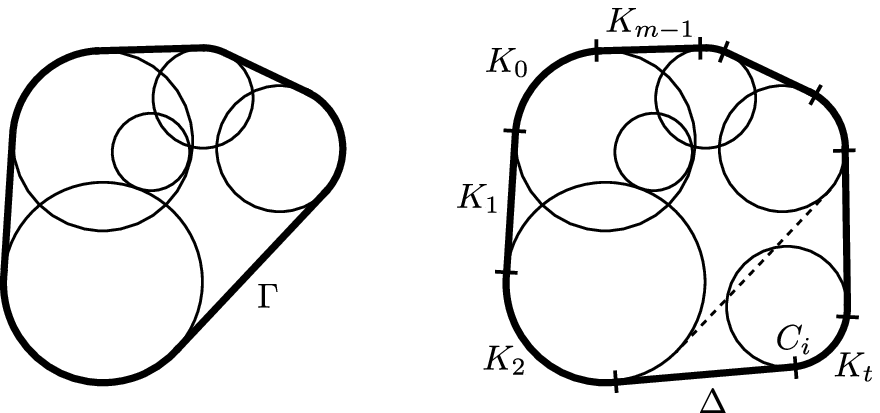}}
\caption{Illustrating the proof of Proposition~\ref{prop1}\label{fig2}}
\end{figure}

Assume that $X\subseteq F$, and let  $Y=\chulla FX$. Since $C\subseteq \rhullo 2(\points X)$ holds for all $C\in Y$, that is $\pointss Y\subseteq  \rhullo 2{( \points  X )}$, we obtain  
\[\rhullo 2{( \pointss  Y )} \subseteq   \rhullo 2{( \points  X  )}\text.\]
We have equality here since $Y\supseteq X$. This implies $\chulla FY=\chulla FX$, and it follows that $\chullo F$ is a closure operator with the property $\chulla F\emptyset=\emptyset$.
Observe that a closure operator $\Phi$ on $U$ satisfies   the anti-exchange property \eqref{aeprogor} if{f}
\begin{equation}\label{aepropnw}
\begin{aligned}
&\text{if $\Phi(X)=X\in \powset U$, $x_0,x_1\in U\setminus X$,}
\cr
&\text{and  $\Phi(X\cup\set{ x_0})=\Phi(X\cup\set {x_1})$, then $x_0=x_1$.}
\end{aligned}
\end{equation}
Hence, tailoring \eqref{aepropnw} to our situation, we assum\skc{e}   $X=\chulla FX \in \powset F$, $\set{C_0,C_1}\subseteq F\setminus X$, and $\chulla F{X\cup\set{C_{0}}}=\chulla F{X\cup\set{C_{1}}}$. 
We have to show $C_0=C_1$.
Combining our assumption with \eqref{dblrzlsr}, we obtain 

\begin{equation}\label{osdkHr}
\rhullo2 \bigl({ C_0 \cup\points X }\bigr) = 
\rhullo2 \bigl({ C_1 \cup\points X }\bigr)\text.
\end{equation} 
Let $\Gamma$ and $\Delta$ be the boundary of $\rhullo2 (\points X)$ and that of the set given in \eqref{osdkHr}, respectively; see the thick closed curves in Figure~\ref{fig2}. In the figure, $X$ is depicted twice: in itself on the left and with $C_i$ on the right. We can imagine $\Gamma$ \skc{and $\Delta$} as \skc{tight} resilient rubber noose\skc{s} around the members of $X$ \skc{and $X\cup\set{C_i}$, respectively}. Pick an $i\in\set{0,1}$.
Observe that  $C_i\notin X=\chulla FX$ implies $C_i\not\subseteq \rhullo2 (\points X )=\skc{\rhullo2 (\Gamma)}$. \skc{Clearly, $\Delta$ can be decomposed into finitely many segments $K_0,K_1,\dots, K_{m-1}$, listed anti-clockwise, such that these segments are of positive length and the following properties hold for all $j\in\set{0,\dots, m-1}$ and $i\in \set{0,1}$: 
\begin{enumeratei}
\item The endpoint of $K_j$ is the first point of $K_{j+1}$, where $j+1$ is understood modulo $m$.
\item  Either $K_j\subseteq \Gamma$, or no inner point of $K_j$ belongs to $\rhullo2 (\Gamma)$.
\item\label{dwzzr} If no inner point of $K_j$ belongs to $\rhullo2 (\Gamma)$, then either $K_j$ is a straight line segment, or $K_j$ is an arc of $C_i$.
\item\label{dwrkdt} There exists a $t\in \set{0,\dots,m-1}$ such that either
\begin{enumeratei}
\item[(a)] $K_t$ is not a straight line segment  and none of its inner points belongs to $\rhullo2 (\Gamma)$; or
\item[(b)]  both $K_t$ and $K_{t+1}$ are straight line segments and their common point is outside $\rhullo2 (\Gamma)$.
\end{enumeratei}
\end{enumeratei}
As opposed to Figure~\ref{fig2}, note that $t$ in \eqref{dwrkdt} need not be unique.
Clearly,  \eqref{dwrkdt}(b) holds only if{f} the radius of $C_i$ is zero; in this case $C_i$ consists of the common point of $K_t$ and $K_{t+1}$, which  is the only angle  point of $\Delta$ outside $\rhullo2 (\Gamma)$, and we conclude $C_0=C_1$. Otherwise, if \eqref{dwrkdt}(a) holds, then \eqref{dwzzr} implies that $K_t$ is a common arc of $C_0$ and $C_1$, whence we conclude $C_0=C_1$ again. Therefore, $\chullo F$ is an anti-exchange closure operator, and  $\pair F{\chullo F}$ is a convex geometry.} 
\end{proof}

\subsection{Collinear circles}
For a  set $F$ of collinear circles and  $A,B\in F$, we define the \emph{horizontal interval}
\begin{align*}\fromto FAB =  \set{ C\in F: \lend A\sqsubseteq \lend C\text{,  }\rend C \sqsubseteq \rend B} \text.
\end{align*} 
Note that $\fromto FAB$ can be empty. If $A\neq B$ and $A$ is inside the circle $B$, then $A\in \fromto FAB\cap \fromto FBA$   but $B\notin \fromto FAB\cup\fromto FBA$. Note also that, for  $A\in F$, the horizontal interval $\fromto FAA$ is the \emph{set of circles} of $F$ that are \emph{inside}  $A$, including $A$ itself.

\begin{lemma}\label{fromtolemma}
If $F$ is a finite set of  collinear circles in the plane, then the following three statements hold.
\begin{enumeratei}
\item\label{fromtolemmaa}  $\refket{\set\emptyset}\cup \set{\fromto FAB: A,B\in F}\subseteq \lat{\pair F{\chullo F}}$. If $F$ is concave, then even the equality  $\refket{\set\emptyset} \cup \set{\fromto FAB: A,B\in F}= \lat{\pair F{\chullo F}}$ holds.
\item\label{fromtolemmab} For $\emptyset\neq X\in  \lat{\pair F{\chullo F}}$,  let $A$ and $B$ denote the circles in $X$ with least left end and greatest right end, respectively\skc{. If} $F$ is concave\skc{, t}hen \skc{the equality}
$X=\fromto FAB = \fromto FAA\vee\fromto FBB
$ holds in $\lat{\pair F{\chullo F}}$.
\item\label{fromtolemmac} If $X\in \lat{\pair F{\chullo F}}$,  then $X=\bigvee\set{ \fromto FCC: C\in X}$.
\end{enumeratei}
\end{lemma}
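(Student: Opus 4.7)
My plan is to handle the three parts of Lemma~\ref{fromtolemma} in order: (i) is proved in two steps corresponding to its two inclusions, (ii) follows as a short lattice-theoretic corollary of (i), and (iii) is a direct containment argument that does not use concavity.

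For the inclusion $\{\emptyset\} \cup \{\fromto FAB : A,B \in F\} \subseteq \lat{\pair F{\chullo F}}$ in (i), I fix a nonempty $I = \fromto FAB$ and show that any $C \in F$ with $C \subseteq \rhullo 2(\points I)$ already lies in $I$. Every point of $\points I$ has $x$-coordinate at least $\lmpt A$, so $\lmpt A \le \lmpt C$; if this is strict then $\lend A \sqsubset \lend C$ by \eqref{obvruuules}. In the boundary case $\lmpt A = \lmpt C$, the hull reaches $x = \lmpt A$ only at the single point $(\lmpt A, 0)$, attained only by circles $D \in I$ with $\lmpt D = \lmpt A$; each such $D$ satisfies $\rad D \le \rad A$ because $\lend A \sqsubseteq \lend D$. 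A local curvature comparison near $(\lmpt A, 0)$, where the hull's boundary is an arc of the maximum-radius such circle, then forces $\rad C \le \rad A$, giving $\lend A \sqsubseteq \lend C$. A symmetric argument at the right end yields $\rend C \sqsubseteq \rend B$, so $C \in I$.

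For the reverse inclusion in (i) under concavity, which simultaneously proves the first equality $X = \fromto FAB$ in (ii): given nonempty closed $X$, let $A$ and $B$ be its circles of least left end and greatest right end, respectively. Then $X \subseteq \fromto FAB$ is immediate, and for the reverse, any $C \in \fromto FAB$ satisfies $\lend A \sqsubseteq \lend C$ and $\rend C \sqsubseteq \rend B$, so the concavity axiom \eqref{efbetween} (in the $\sqsubseteq$ form noted in the excerpt) gives $C \subseteq \rhullo 2(A \cup B) \subseteq \rhullo 2(\points X)$, placing $C$ in $\chulla FX = X$. For the second equality in (ii), note that $\fromto FAA \cup \fromto FBB \subseteq \fromto FAB$ follows from $\rend A \sqsubseteq \rend B$ and $\lend A \sqsubseteq \lend B$ (consequences of the choice of $A, B$); closedness of $\fromto FAB$ then gives $\fromto FAA \vee \fromto FBB \subseteq \fromto FAB$. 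The reverse uses $A, B \in \fromto FAA \cup \fromto FBB$ together with the same concavity step to place every $C \in \fromto FAB$ in the join.

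For (iii), no concavity is needed. For $C \in X$ and $D \in \fromto FCC$, the definitions give $[\lmpt D, \rmpt D] \subseteq [\lmpt C, \rmpt C]$ and $\rad D \le \rad C$; a direct computation shows that the closed disk with boundary $D$ lies inside the closed disk with boundary $C$, so $D \subseteq \rhullo 2(C) \subseteq \rhullo 2(\points X)$ and hence $D \in \chulla FX = X$. This yields $\bigvee\{\fromto FCC : C \in X\} \subseteq X$, while the reverse inclusion is immediate from $C \in \fromto FCC$. The main obstacle I expect is the edge-case curvature argument in the second paragraph: justifying that the hull's local shape near a shared leftmost point is controlled by a single maximum-radius circle in $I$, and extracting the radius inequality $\rad C \le \rad A$ from the containment of $C$ in the hull; once this is in hand, the rest is a routine use of concavity and of the lattice structure on closed sets.
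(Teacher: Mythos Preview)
Your proof is correct, and for the reverse inclusion in (i), for (ii), and for (iii) it follows the paper's argument essentially verbatim. The one genuine divergence is in establishing the forward inclusion $\{\emptyset\} \cup \{\fromto FAB : A,B \in F\} \subseteq \lat{\pair F{\chullo F}}$.

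The paper does not analyze $\rhullo 2\bigl(\points{\fromto FAB}\bigr)$ at all. Instead it exhibits (with reference to the grey shape $H$ in Figure~\ref{fig1}) an auxiliary convex region $H \subseteq \mathbb R^2$, bounded by an arc of $A$, an arc of $B$, and four straight segments, such that $\pointss{\bigl(\fromto FAB\bigr)} \subseteq H$ while no $C \in F \setminus \fromto FAB$ satisfies $C \subseteq H$; convexity of $H$ then gives $\rhullo 2\bigl(\pointss{(\fromto FAB)}\bigr) \subseteq H$, and closedness of $\fromto FAB$ follows at once. You instead work directly with the hull and handle the tie case $\lmpt C = \lmpt A$ by a local curvature comparison at $(\lmpt A,0)$. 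Your route is more intrinsic and avoids having to verify the properties of the explicit region $H$ (which the paper leaves to the figure), at the price of the curvature step you correctly identify as the crux; the paper's route replaces that step with a single geometric construction. In substance the two arguments encode the same fact---near the leftmost point the relevant constraint is the arc of the largest-radius circle attaining that leftmost $x$-coordinate---so neither is materially harder than the other.
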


\begin{proof} We can assume $F\neq \emptyset$ since otherwise the lemma is trivial.
Assume $\fromto FAB\neq\emptyset$. 
Clearly, as indicated in Figure~\ref{fig1} by the grey-colored plane shape, there exists a convex subset $H$ of $\mathbb R^2$ such that $\pointss{\bigl(\fromto FAB\bigr)}\subseteq H$ 
but $H$ includes no $C\in F\setminus \fromto FAB$
as a subset. (As in the figure, it is always possible to find an appropriate $H$ whose boundary  is the union of an arc of $A$, that of $B$, and four straight line segments. Note that if $\rad A=0$, then the only arc of $A$ is itself.) 
This implies that  $\fromto FAB  = \chulla F{\fromto FAB}  \in  \lat{\pair F{\chullo F}}$. Thus  
$\refket{\set{\emptyset}}\cup \set{\fromto FAB: A,B\in F}\subseteq\lat{\pair F{\chullo F}}$. 

To prove the converse inclusion under the additional assumption that $F$ is concave, and let $\emptyset \neq X\in \lat{\pair F{\chullo F}}$. Since $F$ is finite, there are a unique $A\in X$ with least left end  $\lend A$ and a unique $B\in X$ with largest right end $\rend B$.
Using that $F$ is concave, we obtain that, for every $C\in \fromto FAB$,  
$C\subseteq \rhullo 2(A\cup B) \subseteq \rhullo 2({\points{X} }) $, that is, $C\in\chulla F{X}$. This implies 
$\fromto FAB\subseteq  \chulla FX=X$. On the other hand, if $C\in X$, then 
the choice of $A$ and $B$ gives 
$\lend A\sqsubseteq  \lend C$ and $\rend C\sqsubseteq  \rend B$, that is, $C\in \fromto FAB$.
Hence, $X=\fromto FAB$, and we obtain 
$\lat{\pair F{\chullo F}}\subseteq \refket{\set{\emptyset}}\cup \set{\fromto FAB: A,B\in F}$. This proves  \eqref{fromtolemmaa} and the first equality in \eqref{fromtolemmab}. 

Next, we do not assume that $F$ is concave. If $C\in X\in \lat{\pair F{\chullo F}}$, then $D\subseteq \rhullo 2(C)\subseteq \rhullo2(\pointss X)$ holds for all $D\in\fromto FCC$. That is, $D\in\chulla FX=X$ for all $D\in\fromto FCC$. 
Thus, for all $C\in X$, we have  $\set C\subseteq \fromto FCC\subseteq X$, which clearly implies \eqref{fromtolemmac}. 

Finally, assume again that $F$ is concave. From \eqref{fromtolemmac} and the first equality in \eqref{fromtolemmab}, we obtain  $X=\fromto FAB\supseteq \fromto FAA\vee\fromto FBB$. \refket{The} reverse inclusion follows from the assumption that $F$ is concave. Hence, \eqref{fromtolemmab} holds.
\end{proof}

\begin{lemma}\label{mzrrDcgbn}
If $F$ is a finite  set of collinear  circles in the plane, then \[\Jir{\!\bigl(\lat{\pair F{\chullo F}}\bigr)} =\set{\fromto FAA: A\in F}\text.\]
\end{lemma}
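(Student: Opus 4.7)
The plan is to establish both inclusions of $\Jir L=\set{\fromto FAA:A\in F}$, where $L:=\lat{\pair F{\chullo F}}$. For the ``$\subseteq$'' direction I would invoke Lemma~\ref{fromtolemma}\eqref{fromtolemmac}: every nonempty $X\in L$ satisfies $X=\bigvee\set{\fromto FCC:C\in X}$, so if $X$ is join-irreducible then one joinand must already equal $X$, forcing $X=\fromto FCC$ for some $C\in X$.

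For the ``$\supseteq$'' direction, I would fix $A\in F$ and set $Y:=\fromto FAA\setminus\set A$. The first reduction is that every $X\in L$ with $X\subsetneq\fromto FAA$ must satisfy $A\notin X$; indeed, $A\in X$ would give $\fromto FAA\subseteq X$ by Lemma~\ref{fromtolemma}\eqref{fromtolemmac}, contradicting $X\subsetneq\fromto FAA$. Hence any such $X$ is contained in $Y$. Once I verify that $Y$ belongs to $L$, it follows that $Y$ is the unique element covered by $\fromto FAA$, making $\fromto FAA$ join-irreducible.

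The crux is therefore to show $\chulla FY=Y$, and here I would lean on collinearity. If $C\in\fromto FAA$ then $\lmpt A\leq\lmpt C$ and $\rmpt C\leq\rmpt A$, so
\[
\rad C=\tfrac12(\rmpt C-\lmpt C)\leq\tfrac12(\rmpt A-\lmpt A)=\rad A,
\]
with equality only if $\lmpt C=\lmpt A$ and $\rmpt C=\rmpt A$, i.e., $C=A$. Consequently every $C\in Y$ has $\rad C<\rad A$, and every point of every such $C$ has vertical coordinate of absolute value strictly less than $\rad A$. Writing the center of $A$ as $\pair{u_A}{0}$, the point $\pair{u_A}{\rad A}\in A$ therefore cannot lie in $\rhullo 2(\points Y)$. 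Hence $A\not\subseteq\rhullo 2(\points Y)$, i.e., $A\notin\chulla FY$; combined with $Y\subseteq\chulla FY\subseteq\chulla F{\fromto FAA}=\fromto FAA$, this yields $\chulla FY=Y$.

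The only slight obstacle I foresee is the degenerate case $\rad A=0$: then $A$ is a single point, the inequality derived above collapses to force any $C\in\fromto FAA$ to coincide with $A$, so $\fromto FAA=\set A$ and $Y=\emptyset$. The empty set is closed thanks to $\chulla F\emptyset=\emptyset$ (Proposition~\ref{prop1}), and the preceding argument still delivers $\emptyset$ as the unique lower cover of the atom $\set A$, so the conclusion is uniform in the radius.
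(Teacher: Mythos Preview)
Your argument is correct. The ``$\subseteq$'' direction matches the paper's exactly (invoke Lemma~\ref{fromtolemma}\eqref{fromtolemmac} and use join-irreducibility). For ``$\supseteq$'', both you and the paper reduce to showing that $Y:=\fromto FAA\setminus\set A$ is closed and is the unique lower cover of $\fromto FAA$; the observation that $A\in X$ forces $\fromto FAA\subseteq X$ is common to both.

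The genuine difference is in how $Y$ is shown to be closed. The paper does this \emph{structurally}: it picks $B,C\in Y$ with least left end and greatest right end, verifies $\lend A\sqsubset\lend B$ and $\rend C\sqsubset\rend A$, and then identifies $Y=\fromto FBC$, which is closed by Lemma~\ref{fromtolemma}\eqref{fromtolemmaa}. You do it \emph{geometrically}: from $\lmpt A\le\lmpt C$ and $\rmpt C\le\rmpt A$ you deduce $\rad C<\rad A$ for every $C\in Y$, so the topmost point $\pair{u_A}{\rad A}$ of $A$ lies strictly above the horizontal strip containing $\points Y$ and hence outside $\rhullo 2(\points Y)$; this gives $A\notin\chulla FY$, and monotonicity of the closure together with closedness of $\fromto FAA$ finishes the argument. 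Your route is a bit more self-contained (it does not need to name $B$ and $C$ or invoke the $\sqsubset$ order carefully), while the paper's route yields the extra information that the unique lower cover is itself a horizontal interval. Your handling of the degenerate case $\rad A=0$ is also fine.
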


\begin{proof} Although the statement is intuitively more or less clear, we give an exact proof. 
First, we show that, for $A\in F$, $\fromto FAA\in \Jir{\!\bigl(\lat{\pair F{\chullo F}}\bigr)}$. This is obvious if $|\fromto FAA|=1$, that is, if 
$\fromto FAA$ is an atom in $\lat{\pair F{\chullo F}}$. Assume $|\fromto FAA|\geq 2$. Let $B$ and $C$ be the circles with least left end and  greatest right end in $\fromto FAA\setminus\set A$, respectively. (They need not be distinct.)  Since the circles of $F$ are determined by their left ends, $\lend A\neq \lend B$, and we obtain $\lend A\sqsubset \lend B$ by the definition of $\fromto FAA$. Similarly, $\rend C\sqsubset \rend A$. Now if $D\in\fromto FAA\setminus A$, then the choice of $B$ and $C$ gives $D\in\fromto FBC$. 
On the other hand, if $D\in\fromto FBC$, then  
\[\lend A\sqsubset \lend B\sqsubseteq \lend D\text{ and } \rend D\sqsubseteq \rend C\sqsubset \rend A\]
yield $D\in\fromto FAA\setminus A$. Thus 
$\fromto FAA\setminus A= \fromto FBC\in \lat{\pair F{\chullo F}}$ is a unique lower cover of $\fromto FAA$, because $A\in \skc{X}$ implies $\fromto FAA\subseteq X$ for all $X\in \skc{\lat{\pair F{\chullo F}}}$. Hence,  $\fromto FAA\in \Jir{\!\bigl(\lat{\pair F{\chullo F}}\bigr)}$. 
This proves
$\Jir{\!\bigl(\lat{\pair F{\chullo F}}\bigr)} \supseteq \set{\fromto FAA: A\in F}$.

To prove the converse inclusion, let $X\in\Jir{\bigl(\lat{\pair F{\chullo F}}\bigr)}$.
We obtain from Lemma~\ref{fromtolemma}\eqref{fromtolemmac} that $X=\fromto F{\skc A}{\skc A}$ for some ${\skc A}\in F$.  Thus $\Jir{\bigl(\lat{\pair F{\chullo F}}\bigr)}\subseteq \set{\fromto F{\skc A}{\skc A}: {\skc A}\in \skc F}$.
\end{proof}

\begin{lemma}\label{mirlemma}
Assume that $F$ is a finite,  concave set of collinear circles in the plane, and $|F|\geq \skc{1}$. \skc{If} the circle in $F$ with least left end and that with largest right end \skc{are} denoted by $K_l$ and $K_r$, respectively\skc{, then} 
\[ \skc{\set{\emptyset}}\cup \set{\fromto F{K_l}B: B\in F}\text{ and }
\skc{\set{\emptyset}}\cup \set{\fromto FA{K_r}: A\in F}
\]
are chains in $\lat{\pair F{\chullo F}}$, and each nonempty $X\in \lat{\pair F{\chullo F}}$ is of the form
\[X=\fromto F{K_l}B \wedge \fromto FA{K_r}\skc{\text{, that is, }} \fromto F{K_l}B \cap \fromto FA{K_r},\] 
where $A,B\in X$ are defined in Lemma~\ref{fromtolemma}\eqref{fromtolemmab}.
\end{lemma}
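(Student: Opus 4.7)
The plan is to reduce each of the three assertions to Lemma~\ref{fromtolemma} together with one observation: each $C\in F$ is uniquely determined by $\lend C$, and also by $\rend C$ (from the shape of these pairs on a circle whose center lies on the $x$-axis), so the lexicographic order $\sqsubseteq$ is a \emph{linear} order both on $\set{\lend C:C\in F}$ and on $\set{\rend C:C\in F}$. Membership of the sets $\fromto F{K_l}B$, $\fromto FA{K_r}$, and $\emptyset$ in $\lat{\pair F{\chullo F}}$ is then immediate from Lemma~\ref{fromtolemma}\eqref{fromtolemmaa}.

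To prove that $\set{\emptyset}\cup\set{\fromto F{K_l}B:B\in F}$ is a chain, I would first note that because $K_l$ has least left end, the condition ``$\lend{K_l}\sqsubseteq \lend C$'' is automatic for every $C\in F$, so
\[
\fromto F{K_l}B=\set{C\in F:\rend C\sqsubseteq \rend B}\text.
\]
Given $B_1,B_2\in F$, the linearity of $\sqsubseteq$ on right ends lets me assume $\rend{B_1}\sqsubseteq\rend{B_2}$, and then transitivity of $\sqsubseteq$ yields $\fromto F{K_l}{B_1}\subseteq \fromto F{K_l}{B_2}$. Including $\emptyset$ as the bottom element keeps the family a chain. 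The chain property of $\set{\emptyset}\cup\set{\fromto FA{K_r}:A\in F}$ is handled symmetrically, using that $K_r$ makes ``$\rend C\sqsubseteq \rend{K_r}$'' automatic, so $\fromto FA{K_r}=\set{C\in F:\lend A\sqsubseteq \lend C}$.

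For the decomposition of a nonempty $X\in\lat{\pair F{\chullo F}}$, I would invoke Lemma~\ref{fromtolemma}\eqref{fromtolemmab} to write $X=\fromto FAB$ for the particular $A,B\in X$ specified there. Using the two simplified descriptions just derived,
\[
\fromto F{K_l}B\cap \fromto FA{K_r}=\set{C\in F:\lend A\sqsubseteq\lend C \text{ and } \rend C\sqsubseteq\rend B}=\fromto FAB=X\text.
\]
Finally, since $\lat{\pair F{\chullo F}}$ is a complete meet-subsemilattice of the powerset lattice (as recalled in Section~\ref{secketto}), the lattice meet $\wedge$ coincides with $\cap$, yielding $X=\fromto F{K_l}B\wedge\fromto FA{K_r}$.

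The only nontrivial input is the linear-order observation at the outset, and everything else is a straightforward unpacking of the definition of $\fromto F\cdot\cdot$ combined with the extremality of $K_l$ and $K_r$; consequently, I do not foresee a real obstacle beyond bookkeeping.
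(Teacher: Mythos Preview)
Your proof is correct and follows essentially the same route as the paper: use the linearity of $\sqsubseteq$ to compare the intervals $\fromto F{K_l}{B_i}$ (and dually), then invoke Lemma~\ref{fromtolemma}\eqref{fromtolemmab} to write $X=\fromto FAB$ and observe $\fromto FAB=\fromto F{K_l}B\cap\fromto FA{K_r}$. You are a bit more explicit than the paper in simplifying the horizontal intervals and in justifying $\wedge=\cap$, but the argument is the same.
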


\begin{proof} Let $B_1,B_2\in F$. We can assume $\rend {B_1}\sqsubseteq  \rend {B_2}$ since ``$\sqsubseteq$'' is a linear order. By transitivity, $\fromto F{K_l}{B_1} \subseteq  \fromto F{K_l}{B_2}$. 
Therefore,  the first set in the lemma is a chain. By left-right duality, so is the second one. 

Next, let $\emptyset\neq X\in \lat{\pair F{\chullo F}}$. \skc{We have} $X=\fromto FAB$ by Lemma~\ref{fromtolemma}\eqref{fromtolemmab}, and the obvious \skc{equality} $\fromto FAB = \fromto F{K_l}B \cap \fromto FA{K_r}$ completes the proof. 
\end{proof}

\subsection{Lattices associated with convex geometries} One of the many equivalent ways, actually the first way, of defining convex geometries was to use meet-distributive lattices; see \init{R.\,P.\ }Dilworth \cite{r:dilworth40}.
Now we recall some concepts from Lattice Theory.

A finite lattice $L$ is \emph{lower semimodular} if whenever $a,b\in L$ such that $a$ is covered by $a\vee b$, in notation $a\prec a\vee b$, then $a\wedge b\prec b$. Equivalently, if the implication $a\prec b\then a\wedge c\preceq b\wedge c$ holds for all $a,b,c\in L$.
We will often use the trivial fact that 
this property is inherited by intervals and, more generally, by cover-preserving sublattices. 
For $u\neq 0$ in a finite lattice $L$, let $\lowstar u$ denote the meet of all lower covers of $u$. A finite lattice $L$ is \emph{meet-distributive} if the interval $[\lowstar u,u]$ is a distributive lattice for all $u\in L\setminus \set0$.
For other definitions,  see  \init{K.\ }Adaricheva~\cite{adaricheva},  \init{K.\ }Adaricheva, \init{V.\,A.\ }Gorbunov and \init{V.\,I.\ }Tumanov~\cite{r:adarichevaetal}, and 
\init{N.\ }Caspard and \init{B.\ }Monjardet~\cite{caspardmonjardet}; see also  \init{G.\ }Cz\'edli~\cite[Proposition 2.1 and Remark 2.2]{czgcoord} and \init{K.\ }Adaricheva and \init{G.\ }Cz\'edli \cite{adarichevaczg} for recent surveys and developments. 

The study of  meet-distributive lattices (and their duals)  goes back to \init{R.\,P.\ }Dilworth \cite{r:dilworth40}, 1940. There were  a lot of discoveries and rediscoveries of these lattices and equivalent combinatorial structures (including convex geometries); see  \cite{r:adarichevaetal} and \cite{czgcoord}, \init{B.\ }Monjardet~\cite{monjardet}, and \init{M.~}Stern~\cite{stern} for surveys. We recall the following statement; its origin is  the combination of \init{M.\ }Ward~\cite{ward} (see also \init{R.\,P.~}Dilworth~\cite[page 771]{r:dilworth40}, where \cite{ward} is cited) and \init{S.\,P.~}Avann~\cite{avann} (see also 
\init{P.\,H.\ }Edelman \cite[Theorem 1.1(E,H)]{edelmanproc}, where \cite{avann} is recalled). 
\begin{claim}\label{mdstrthnlsm}
Every finite meet-distributive lattice is lower semimodular.
\end{claim}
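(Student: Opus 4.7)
My plan is to proceed by induction on $|L|$ and reduce lower-semimodularity to a local computation inside the single distributive interval $[\lowstar u,u]$ supplied by meet-distributivity. Fix $a,b\in L$ with $a\prec u:=a\vee b$; the goal is $a\wedge b\prec b$. A preliminary observation that the argument will exploit is that meet-distributivity is inherited by principal ideals: for any $u\in L$ and any $v\in(0,u]$, every lower cover of $v$ in $L$ automatically lies in $[0,u]$, hence the set of lower covers of $v$, the element $\lowstar v$, and the distributive interval $[\lowstar v,v]$ coincide in $L$ and in $[0,u]$. Consequently, whenever $u$ is not the top element $\hat 1$ of $L$, the induction hypothesis applied inside $[0,u]$ delivers $a\wedge b\prec b$ at once.

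It therefore suffices to treat the case $u=\hat 1$, in which $a$ is a coatom. If $b=\hat 1$ there is nothing to prove, so one chooses any coatom $u_j$ of $L$ with $b\leq u_j$. The relation $a\vee b=\hat 1>a$ forces $b\not\leq a$, so $u_j\neq a$. Now both $a$ and $u_j$ sit inside the distributive interval $[\lowstar{\hat 1},\hat 1]$, and the classical transposition $[a\wedge u_j,u_j]\cong[a,\hat 1]$ given by $x\mapsto x\vee a$ forces $a\wedge u_j\prec u_j$ in $L$. If $b=u_j$ the argument is complete; otherwise $u_j<\hat 1$ and one descends to the strictly smaller meet-distributive lattice $[0,u_j]$, where $a\wedge u_j\prec u_j$ holds together with $(a\wedge u_j)\vee b=u_j$ (any other value would give $b\leq a$, a contradiction). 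The induction hypothesis applied in $[0,u_j]$ then yields $(a\wedge u_j)\wedge b\prec b$, and since $b\leq u_j$ the left-hand side equals $a\wedge b$, closing the argument.

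The main obstacle will be the bookkeeping around the two successive reductions: one must verify that meet-distributivity really does transfer to the principal ideals $[0,u]$ and $[0,u_j]$ used to drive the induction, and one must route the top-level covering relation $a\wedge u_j\prec u_j$ (produced by distributivity inside $[\lowstar{\hat 1},\hat 1]$) all the way down to the desired cover $a\wedge b\prec b$. Once this plumbing is in place, the only genuinely new ingredient is the standard distributive-lattice isomorphism $[a\wedge u_j,u_j]\cong[a,\hat 1]$, and everything else is a forced case analysis driven by the hypothesis that $a$ is a coatom.
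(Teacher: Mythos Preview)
Your argument is correct. The paper itself does not supply a proof of this claim; it is recalled from the literature with attributions to Ward, Dilworth, Avann, and Edelman, so there is no in-paper argument to compare against. Your induction on $|L|$---reducing first to the case $a\vee b=\hat 1$ via the heredity of meet-distributivity to principal ideals, and then stepping down through a coatom $u_j\geq b$ using the transposition $[a\wedge u_j,u_j]\cong[a,\hat 1]$ inside the distributive interval $[\lowstar{\hat 1},\hat 1]$---is a clean self-contained route. The one point worth making explicit is that $a\wedge u_j\geq\lowstar{\hat 1}$ (both $a$ and $u_j$ are lower covers of $\hat 1$), so the meet and the transposition genuinely live in that distributive interval, and the resulting cover $a\wedge u_j\prec u_j$ is a cover in $L$ because $[\lowstar{\hat 1},\hat 1]$ is an interval of $L$; you use both facts implicitly, and they are indeed true.
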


The \emph{width} of a partially ordered set $P$, denoted by $\width{P}$, is the smallest $k$ such that $P$ is  the union of $k$ appropriate chains of $L$. Equivalently, see 
\init{R.\,P.\ }Dilworth \cite{dilworth50}, $\width P$ is the largest $k$ such that 
there is a $k$-element antichain in $P$.
For a  finite lattice $L$, we are interested in the width of $\Mir L$. Note that, clearly,  $\width{\Mir L}$ is the smallest $k$ such that the union of $k$ maximal chains of $L$ includes $\Mir L$.  
Following \init{G.\ }Gr\"atzer and \init{E.\ }Knapp \cite{gratzerknapp} and, in the present form,  \init{G.\ }Cz\'edli and \init{E.\,T.\ }Schmidt \cite{czgschtJH},
finite lattices $L$ with $\width{\Mir L}\leq 2$  are called \emph{dually slim}.
Finite lattices $L$ with $\width{\Jir L}\leq 2$ are, of course, called \emph{slim}.

If $L$ is a lattice and $x\in L$, then the \emph{principal ideal} $\set{y\in L: y\leq x}$ is denoted by $\ideal x$.
We have already mentioned that a finite convex geometry $G=\pair U\Phi$ determines a lattice, the lattice $\lat{(G)}=\lat{\pair U\Phi}$ of its closed sets.
Conversely, if $L$ is a 
finite meet-distributive lattice, then we can take the combinatorial structure $\geom L=\geomd L$. 
Part of the following lemma, which asserts that finite convex geometries and finite meet-distributive lattices are essentially the same, 
was proved by  \init{P.\,H.\ }Edelman~\cite[Theorem 3.3]{edelman}, see also \init{D.\ }Armstrong~\cite[Theorem 2.8]{armstrong}. The rest can be extracted from \init{K.\ }Adaricheva, \init{V.\,A.\ }Gorbunov, and \init{V.\,I.\ }Tumanov\cite[proof of Theorem 1.9]{r:adarichevaetal}; see also \init{G.\ }Cz\'edli \cite[Lemma 7.4]{czgcoord} for more  details.

\begin{lemma}\label{geomversuslat} If $L$ is a finite meet-distributive lattice and  $G=\pair U\Phi$ is a  finite convex geometry, then the following three statements hold.
\begin{enumeratei}
\item\label{latamatEQa}  $\lat{(G)}$ is a finite meet-distributive lattice.
\item\label{latamatEQb} $\geom L$ is a  finite convex geometry.
\item\label{latamatEQc} 
$\lat{(\geom L)}  \cong L$ and $\geom{\lat {(G)}} \cong G$.
\end{enumeratei}
\end{lemma}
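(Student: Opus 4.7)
The plan is to prove (i)--(iii) by executing the three classical steps that move between finite convex geometries and finite meet-distributive lattices, and then check they are mutually inverse up to isomorphism. Throughout, one works with closed sets ordered by inclusion; the meet operation in $\lat{(G)}$ is intersection, while the join is $X\vee Y=\Phi(X\cup Y)$.

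\textbf{Part (i).} I would show that for every $X\in \lat{(G)}\setminus\set\emptyset$, the interval $[\lowstar X,X]$ is a Boolean lattice; distributivity, hence meet-distributivity of $\lat{(G)}$, follows immediately. To this end I describe $[\lowstar X,X]$ as the collection of closed subsets $Y$ with $\lowstar X\subseteq Y\subseteq X$, and identify each such $Y$ with the set $Y\setminus \lowstar X$ of ``extreme points of $X$ over $\lowstar X$''. The anti-exchange property \eqref{aeprogor}, applied with the base set $\lowstar X$, guarantees that each of these extreme points can be removed or reinstated independently of the others, so the identification is an isomorphism onto the Boolean lattice on the set of extreme points. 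Claim~\ref{mdstrthnlsm} (used in the reverse direction, or the characterisation of meet-distributivity by Boolean $[\lowstar u,u]$-intervals) then yields (i).

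\textbf{Part (ii).} I would define the candidate closure operator $\Phi_L\colon \powset{\Jir L}\to \powset{\Jir L}$ by $\Phi_L(X)=\Jir L\cap \ideal{\bigvee X}$, with the convention $\bigvee\emptyset=0$. Isotonicity, idempotence and $\Phi_L(\emptyset)=\emptyset$ are immediate; the family of closed sets is $\set{\Jir L\cap \ideal x:x\in L}$, matching $\geomd L$. The nontrivial step is anti-exchange in the form \eqref{aepropnw}: if $X=\Phi_L(X)$ and $\Phi_L(X\cup\set{x_0})=\Phi_L(X\cup\set{x_1})$ for $x_0,x_1\in \Jir L\setminus X$, then setting $u=(\bigvee X)\vee x_0=(\bigvee X)\vee x_1$ I obtain that both $x_0,x_1$ lie in the interval $[\lowstar u,u]$, which is Boolean by meet-distributivity. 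Inside this Boolean lattice, $x_0$ and $x_1$ are atoms above $\bigvee X\in [\lowstar u,u]$ generating the same top; a standard Boolean argument (or direct inspection, since in a Boolean interval a join-irreducible is determined by its cover relation with $\lowstar u$) forces $x_0=x_1$.

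\textbf{Part (iii).} For $\lat{(\geom L)}\cong L$, I would use the map $\alpha\colon L\to \lat{(\geom L)}$, $\alpha(x)=\Jir L\cap \ideal x$: it is obviously order-preserving and, by the definition of $\geomd L$, surjective onto the closed sets; it is injective because, in any finite meet-distributive lattice, every element is the join of the join-irreducibles below it, giving the inverse $X\mapsto \bigvee X$. For $\geom{\lat{(G)}}\cong G$, I would construct a bijection $\beta\colon U\to \Jir(\lat{(G)})$ by $\beta(u)=\Phi(\set u)$. Injectivity follows from anti-exchange with $X=\emptyset$: if $\Phi(\set u)=\Phi(\set v)$ with $u\ne v$, then $u\in \Phi(\set v)$ and $v\in\Phi(\set u)$, a contradiction. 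For surjectivity and join-irreducibility one checks that the unique element of $J\setminus \lowstar J$, for $J$ join-irreducible in $\lat{(G)}$, is some $u\in U$ with $\Phi(\set u)=J$; here one uses that the interval $[\lowstar J,J]$ is Boolean (by (i)) and has $J$ join-irreducible, hence it is a two-element chain. Finally, one verifies that $\beta$ carries the closed-set family of $G$ onto that of $\geom{\lat{(G)}}$, i.e., that $\beta(\Phi(Y))=\Jir(\lat{(G)})\cap \ideal{\Phi(Y)}$ for all $Y\subseteq U$.

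\textbf{Main obstacle.} The central technical point is the anti-exchange verification in (ii), where one must translate the abstract meet-distributive structure of $L$ into the combinatorial condition on $\Jir L$; this is handled by reducing to a Boolean interval produced in (i). The rest is careful bookkeeping with closed sets, and much of it (especially the Edelman/Armstrong half of (iii)) is explicitly available in the cited literature.
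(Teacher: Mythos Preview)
The paper does not give its own proof of this lemma; it cites Edelman~\cite{edelman}, Armstrong~\cite{armstrong}, \cite{r:adarichevaetal}, and~\cite{czgcoord}. Your Parts~(i) and~(iii) reproduce the classical reasoning and are essentially correct (in (i), the real job of anti-exchange is to show that each lower cover of a closed $X$ differs from $X$ by exactly one point; once that is known, the Boolean structure of $[\lowstar X,X]$ follows from monotonicity of $\Phi$ alone).

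Part~(ii), however, has a genuine gap. You claim that $x_0$, $x_1$, and $\bigvee X$ all lie in the Boolean interval $[\lowstar u,u]$ and then argue there. This fails already in the smallest nontrivial example. Let $L=\lat{\pair E{\phullo 1E}}$ with $E=\set{1,2,3}\subset\mathbb R$; its join-irreducibles are $\set 1,\set 2,\set 3$. With $X=\bigl\{\set 1\bigr\}$ and $x_0=\set 3$ one gets $u=\set 1\vee\set 3=\set{1,2,3}$, whose lower covers are $\set{1,2}$ and $\set{2,3}$, so $\lowstar u=\set 2$. Neither $\bigvee X=\set 1$ nor $x_0=\set 3$ lies above $\set 2$, so neither belongs to $[\lowstar u,u]$, and your Boolean argument cannot even be started. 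A correct route: choose any $v$ with $\bigvee X\le v\prec u$; then $x_0\not\le v$ and $x_1\not\le v$ (otherwise $u=(\bigvee X)\vee x_i\le v$), and invoke the standard consequence of meet-distributivity (equivalently, the height identity $\length{[0,w]}=|\Jir L\cap\ideal w|$) that for each covering $v\prec u$ there is a \emph{unique} $j\in\Jir L$ with $j\le u$ and $j\not\le v$. This uniqueness forces $x_0=x_1$. The Boolean interval $[\lowstar u,u]$ is indeed the engine behind that uniqueness, but the individual join-irreducibles need not sit inside it.
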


\subsection{Dually slim, lower semimodular lattices}
Finite, slim, semimodular lattices are more or less understood. Therefore, so are their duals, the dually slim, lower semimodular, finite lattices. The following lemma  is practically known, but we will  explain how to extract it from the literature. The notation introduced before Lemma~\ref{geomversuslat} is still in effect.

\begin{lemma}\label{slimgeomlemma} \skc{If} $\Phi$ \skc{is} a closure operator on $U$\skc{, t}hen 
 $G=\pair U\Phi$ is a  finite convex geometry of convex dimension at most 2 if{f}$\,$ $\lat{(G)}$ is a finite, dually slim, lower semimodular lattice.
\end{lemma}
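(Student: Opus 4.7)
The plan is to split the biconditional into two separate equivalences that can be handled independently: a \emph{dimensional} equivalence (``convex dimension $\leq 2$'' $\iff$ ``dually slim'') and a \emph{structural} equivalence (``$G$ is a convex geometry'' $\iff$ ``$\lat{(G)}$ is lower semimodular''), each of these understood under the assumption that the other side of the conjunction is already in place. The dimensional equivalence is tautological: the paper defines the convex dimension of $G$ to be $\width{\Mir{\!\bigl(\lat{(G)}\bigr)}}$, while being dually slim is by definition $\width{\Mir L}\leq 2$. So the two ``$\leq 2$'' conditions coincide on the nose.

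For the structural equivalence, I would use Lemma~\ref{geomversuslat} to trade ``$G$ is a convex geometry'' for ``$\lat{(G)}$ is a finite meet-distributive lattice.'' The forward direction of this trade is part~\eqref{latamatEQa}, while the reverse direction follows from~\eqref{latamatEQb} and~\eqref{latamatEQc}, since the closure system $G$ is determined by its lattice of closed sets and the lattice of a meet-distributive $L$ corresponds via $\geom L$ to a bona fide convex geometry. Thus it suffices to show that, for a finite lattice $L$ which we already know to be dually slim, lower semimodularity is equivalent to meet-distributivity. One half of this, namely meet-distributive $\Rightarrow$ lower semimodular, is precisely Claim~\ref{mdstrthnlsm} and requires no further work.

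The real content lies in the converse: dually slim plus lower semimodular must imply meet-distributive. My approach is to fix $u\in L\setminus\set 0$ and argue that $[u_\ast,u]$ is Boolean, hence distributive. First I would set up an injection from the set of lower covers of $u$ into $\Mir L$ as follows: for each $v\prec u$, let $m_v$ be a maximal element of the nonempty set $\set{x\in L: v\leq x,\ u\not\leq x}$. A standard argument shows $m_v\in\Mir L$, and $u\wedge m_v=v$ because $u\wedge m_v$ lies in $[v,u)=\set v$. If $v_1\neq v_2$ are lower covers of $u$ and $m_{v_1}\leq m_{v_2}$, then $v_1=u\wedge m_{v_1}\leq u\wedge m_{v_2}=v_2$, forcing $v_1=v_2$, a contradiction. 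Hence $\set{m_v:v\prec u}$ is an antichain in $\Mir L$, and dual slimness yields at most two lower covers of $u$. Then lower semimodularity, applied to the at most two coatoms $v_1,v_2$ of $[u_\ast,u]$, forces $v_1\wedge v_2=u_\ast\prec v_i$, and a short cover-preservation argument shows $[u_\ast,u]=\set{u_\ast,v_1,v_2,u}$ (or a 2-chain, or trivial), which is Boolean.

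I expect the main obstacle to be the injection-and-antichain argument in the key direction, especially the verification that the constructed $m_v$ is indeed meet-irreducible and that the bound on lower covers transfers cleanly into the Boolean structure of $[u_\ast,u]$; the rest is bookkeeping. After this step, putting everything together is immediate: $\lat{(G)}$ being meet-distributive delivers the anti-exchange property of $G$ via Lemma~\ref{geomversuslat}, while the dual slimness of $\lat{(G)}$ yields convex dimension at most 2.
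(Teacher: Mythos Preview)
Your proposal is correct and shares the paper's overall scaffolding: both reduce, via Lemma~\ref{geomversuslat}, to showing that for a finite lattice $L$ the conditions ``meet-distributive and $\width{\Mir L}\leq 2$'' and ``lower semimodular and dually slim'' are equivalent, and both dispatch the implication meet-distributive $\Rightarrow$ lower semimodular by Claim~\ref{mdstrthnlsm}. The genuine difference lies in the converse. The paper simply invokes the dual of \cite[Corollary~2.2]{czgolub} to obtain meet-distributivity from ``dually slim and lower semimodular'', treating it as a black box. You instead give a direct elementary argument: the injection $v\mapsto m_v$ from the lower covers of $u$ into an antichain of $\Mir L$ bounds the number of lower covers by $2$, and then lower semimodularity forces $u_\ast\prec v_i$, so that $[u_\ast,u]$ has height~$2$ and hence equals $\{u_\ast,v_1,v_2,u\}$ (or degenerates to a chain), which is Boolean. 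Your route is more transparent and keeps the proof self-contained, at the cost of a paragraph of extra work; the paper's route is terser but relies on an external reference whose proof the reader must track down. Both are valid; the paper's choice reflects that the cited result was already available in the slim-semimodular literature it builds on.
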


\begin{proof} In view of Lemma~\ref{geomversuslat}, all we have to show is that, for a finite lattice $L$, the following two conditions are equivalent:
\begin{enumeratei}
\item\label{sdszdFa} $L$ is meet-distributive and $\width{\Mir L}\leq 2$;
\item\label{sdszdFb} $L$ is lower semimodular and dually slim.
\end{enumeratei}

With reference to \init{M.\ }Ward~\cite{ward} and \init{S.\,P.~}Avann~\cite{avann}, we have already mentioned that meet-distributivity implies lower semimodularity. Thus \eqref{sdszdFa} implies \eqref{sdszdFb}. Conversely, 
assume \eqref{sdszdFb}. \skc{We conclude that} $L$ is meet-distributive by the 
dual\footnote{In what follows, dual statements are often cited without pointing out that they are the duals of the original ones.}
of \init{G.\ }Cz\'edli, \init{L.~}Ozsv\'art, and  \init{B.~}Udvari \cite[Corollary 2.2]{czgolub}, and $\width{\Mir L}\leq 2$ by the definition of dual slimness.
Thus \eqref{sdszdFb} implies \eqref{sdszdFa}.
\end{proof}

Next,  motivated by Lemma~\ref{slimgeomlemma}, we will have a closer look at finite, dually slim, lower semimodular lattices. A finite lattice is \emph{planar} if it has a planar diagram in the obvious sense; for more details see the next subsection.   Planarity is a great help for us, because of Part (B) of the following lemma. A cover-preserving $M_3$ sublattice is a 5-element sublattice $\set{u,a_0,a_1,a_2,v}$ such that $u\prec a_i\prec v$ for $i\in\set{0,1,2}$.

\begin{lemma}[{\init{G.\ }Cz\'edli and \init{E.\,T.\ }Schmidt \cite[Lemmas 2.2 and 2.3]{czgschtJH}}]\label{slimplanar} 
If $L$ is a finite, lower semimodular lattice, then the following two statements hold.
\begin{itemize}
\item[(A)] $L$ is dually slim if{f} it has no cover-preserving $M_3$ sublattice. 
\item[(B)] If $L$ is dually slim, then it $L$ is planar, and each of its elements has at most two lower covers. 
\end{itemize}
\end{lemma}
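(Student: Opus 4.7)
Part (A) splits into two contrapositive implications. For the easier one, that a cover-preserving $M_3$ sublattice forbids dual slimness, suppose $\set{u,a_0,a_1,a_2,v}\subseteq L$ satisfies $u\prec a_i\prec v$ and $a_i\vee a_j=v$ for $i\neq j$. For each $i$, the set $\set{x\in L:a_i\leq x,\,v\not\leq x}$ is nonempty and finite, so it has a maximal element $m_i$. Maximality forces every element strictly above $m_i$ to lie above $v$; chasing covers then shows that $v$ is the unique upper cover of $m_i$, hence $m_i\in\Mir L$. If $m_i\leq m_j$ for some $i\neq j$, then $m_j\geq a_i\vee a_j=v$, a contradiction, so $\set{m_0,m_1,m_2}$ is a three-element antichain in $\Mir L$ and $\width{\Mir L}\geq 3$.

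The converse implication in (A) is the main obstacle: given a three-element antichain $\set{m_0,m_1,m_2}\subseteq\Mir L$, one must manufacture a cover-preserving $M_3$. Writing $m_i^+$ for the unique upper cover of $m_i$, my plan is to work inside the interval $I=[m_0\wedge m_1\wedge m_2,\,m_0^+\vee m_1^+\vee m_2^+]$ and argue by minimality: among all such triples, pick one for which $I$ has the least length. Lower semimodularity, in the usable form that $a\prec a\vee b$ implies $a\wedge b\prec b$, can then be applied repeatedly to push the $m_i$ upward and the $m_i^+$ downward into a tighter triple; by the choice of $I$ the process terminates with a common bottom and a common top, yielding the desired cover-preserving $M_3$. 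The delicate part here is ensuring that each replacement step preserves the antichain condition in $\Mir L$, and this is where I would lean on the hypothesis of lower semimodularity most heavily.

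For part (B), suppose for contradiction that some $u\in L$ has three distinct lower covers $a_0,a_1,a_2$. Since $a_i\prec u$ and the $a_i$ are pairwise distinct, one gets $a_i\vee a_j=u$, and lower semimodularity yields $a_i\wedge a_j\prec a_i$ and $a_i\wedge a_j\prec a_j$ for each pair. If the three pairwise meets all coincide with $w=a_0\wedge a_1\wedge a_2$, then $\set{w,a_0,a_1,a_2,u}$ is a cover-preserving $M_3$, contradicting part (A). Otherwise some $a_i$ inherits two distinct lower covers inside the shorter interval $[w,u]$, and an induction on the length of $[w,u]$ still extracts a cover-preserving $M_3$ below $u$, again contradicting (A). Hence every element of $L$ has at most two lower covers. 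Planarity is then obtained by using dual slimness to write $\Mir L=C_1\cup C_2$ as a union of two chains: assigning each $x\in L$ a pair of ``left'' and ``right'' coordinates relative to $C_1$ and $C_2$, and combining this labelling with the at-most-two-lower-covers property, lets one build a planar Hasse diagram of $L$ inductively from the bottom up.
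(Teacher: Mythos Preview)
The paper does not prove this lemma at all; it is quoted verbatim from Cz\'edli and Schmidt \cite[Lemmas 2.2 and 2.3]{czgschtJH}, so there is no in-paper argument to compare your proposal against. Judged on its own, your write-up is a reasonable outline but has genuine gaps at exactly the places you flag as ``delicate''.

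In part (A), the easy direction is essentially right, with one slip: maximality of $m_i$ gives that every cover $y\succ m_i$ satisfies $v\leq y$, so the unique cover of $m_i$ is $m_i\vee v$, not $v$ itself; this does not affect the conclusion $m_i\in\Mir L$. The hard direction, however, is only a plan. You propose to shrink the interval $I$ by ``pushing the $m_i$ upward and the $m_i^+$ downward'', but you never say what the replacement step is, and you concede that preserving the antichain condition in $\Mir L$ along the way is the crux. That is exactly where the work lies, and nothing in your sketch does it. In the source \cite{czgschtJH} the logic in fact runs the other way: (dually) one first proves that slim semimodular lattices are planar, and the $M_3$-free characterization is then read off from the planar picture, rather than building an $M_3$ directly from three incomparable irreducibles.

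In part (B), the induction for ``at most two lower covers'' does not close. When the three pairwise meets are not all equal you only obtain that some $a_i$ has \emph{two} distinct lower covers $a_i\wedge a_j$ and $a_i\wedge a_k$; but two lower covers is the generic situation and gives you nothing to feed back into an induction hypothesis that requires three. You would need an extra argument locating a third lower cover (or a different descent), and none is given. Finally, the planarity paragraph is a slogan, not a proof: writing $\Mir L=C_1\cup C_2$ and speaking of ``left/right coordinates'' does not by itself produce a crossing-free diagram; the actual construction in \cite{czgschtJH} is several pages and is precisely what this lemma is citing.
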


\subsection{Dual slimness and  Carath\'eodory's  condition}
Following \init{L.\ }Libkin \cite{libkin}, a finite lattice $K$ is said to satisfy  \emph{Carath\'eodory's condition} \cprop n if for any $a,b_1,\dots,b_k\in \Jir K$  such that  $a\leq b_1\vee\dots\vee b_k$, there are $i_1,\dots,i_n\in\set{1,\dots,k}$ such that $a\leq b_{i_1}\vee\dots\vee b_{i_n}$.
Carath\'e\-odory's classical theorem asserts that whenever $p$ is a point and $X$ is a subset of $\mathbb R^{n-1}$ such that  $p$ belongs to the convex hull $\rhullo {n-1}(X)$ of $X$,   then there exists an at most $n$-element subset $Y$ of $X$ such that 
$p\in \rhullo {n-1}(Y)$. This implies that the lattices $\lat\pair E{\phullo {n-1}E}$ satisfy \cprop n{} since their join-irreducible elements are exactly the atoms. We say that a \emph{finite convex geometry 
$\pair U\Phi$ satisfies \cprop n{}} if so does the corresponding lattice, $\lat{\pair U\Phi}$. Lower semimodularity is not assumed in the following statement.

\begin{proposition}\label{cpropos}
Every finite, dually slim lattice satisfies \cprop 2.  Also, every finite convex geometry of convex dimension at most 2 satisfies \cprop 2. 
\end{proposition}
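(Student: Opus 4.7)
The plan is to concentrate on the first claim; the second then follows at once from Lemma~\ref{slimgeomlemma} together with the definition of \cprop 2 for convex geometries. So let $K$ be a finite, dually slim lattice and fix $a, b_1, \dots, b_k \in \Jir K$ with $a \leq v$, where $v := b_1 \vee \dots \vee b_k$. The goal is to produce $i_1, i_2 \in \set{1,\dots,k}$ with $a \leq b_{i_1} \vee b_{i_2}$.

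My strategy is to transfer the problem to $\Mir K$. Using dual slimness, write $\Mir K = C_1 \cup C_2$ as a union of two chains. I would rely on the standard fact that in a finite lattice, $x \leq y$ holds if{f} every meet-irreducible lying above $y$ also lies above $x$ — a consequence of the representation of any non-top element as a meet of meet-irreducibles above it. For $s \in \set{1,2}$ and $i \in \set{1,\dots,k}$, set
\[
N_s := \set{m \in C_s : a \not\leq m} \text{ and } V_i^s := \set{m \in N_s : b_i \not\leq m},
\]
and observe that $a \leq v$ translates into $V_1^s \cup \dots \cup V_k^s = N_s$ for each $s \in \set{1,2}$, because any $m \in N_s$ satisfies $v \not\leq m$, forcing some $b_i \not\leq m$.

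The crucial observation is that each $V_i^s$ is a down-set of the chain $N_s$, and down-sets of a finite chain are themselves linearly ordered by inclusion. Consequently, among $V_1^s, \dots, V_k^s$ one of them, say $V_{i_s}^s$, is the largest and must equal all of $N_s$. Choosing such an $i_s$ for $s=1$ and $s=2$, I claim $a \leq b_{i_1} \vee b_{i_2}$: any meet-irreducible $m$ with $b_{i_1}, b_{i_2} \leq m$, say $m \in C_s$, cannot lie in $N_s$ (otherwise $m \in V_{i_s}^s$ would give $b_{i_s} \not\leq m$), so $a \leq m$; the meet-irreducible characterization of $\leq$ recalled above then delivers the desired inequality.

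No serious obstacle is anticipated: the proof amounts to short bookkeeping once the meet-irreducible reformulation is set up. The one feature worth underlining is that the argument succeeds precisely because down-sets of a \emph{chain} are linearly ordered by inclusion — this is what matches the ``$2$'' in \cprop 2 to the ``$\leq 2$'' in $\width{\Mir K} \leq 2$, and exactly the same scheme would yield \cprop n for any finite lattice with $\width{\Mir K} \leq n$.
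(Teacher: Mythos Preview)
Your proof is correct, and it takes a genuinely different route from the paper's. The paper argues in two citations: dually slim lattices are planar by \init{G.\ }Cz\'edli and \init{E.\,T.\ }Schmidt \cite[Lemma 2.2]{czgschtJH}, and planar lattices satisfy \cprop 2 by \init{L.\ }Libkin \cite[Corollary 4.7 and Theorem 3]{libkin}; the second sentence then follows from Lemma~\ref{slimgeomlemma}, exactly as you note. You bypass planarity entirely and work directly from the definition $\width{\Mir K}\leq 2$: translating $a\leq b_1\vee\dots\vee b_k$ into a covering statement about down-sets $V_i^s$ in each of the two chains $C_s\subseteq\Mir K$, and then using that down-sets of a finite chain are linearly ordered, so some single $V_{i_s}^s$ already exhausts $N_s$. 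What this buys you is a self-contained, elementary argument that does not invoke the structure theory of planar lattices, and---as you point out---it immediately generalizes to show that $\width{\Mir K}\leq n$ implies \cprop n for any $n$, a statement that the paper's route through planarity does not yield. The paper's approach, in turn, is shorter on the page and situates the result within the existing literature on planar and slim lattices that the rest of the paper draws on anyway.
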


\begin{proof} The lattices in question are planar by \skc{{\init{G.\ }Cz\'edli and \init{E.\,T.\ }Schmidt \cite[Lemma 2.2]{czgschtJH}}},  and planar lattices satisfy \cprop 2{} by \init{L.\ }Libkin \cite[Corollary 4.7 and Theorem 3]{libkin}. The rest follows from Lemma~\ref{slimgeomlemma}.
\end{proof}

\begin{proposition}\label{jirmaprop} \refegy{Let $n\in \mathbb N$, and let  $L_1$ and $L_2$ be finite  lattices satisfying  \cprop n. 
If there exists a bijection $\phi\colon\Jir{L_1}\to \Jir{L_2}$ such that 
\begin{equation}\label{cp2djGt}
\text{for all $a,b_1,\dots b_n\in\Jir{L_1}$, 
$\,\,\,a\leq \bigvee_{i=1}^n b_i  \iff  \phi(a)\leq   \bigvee_{i=1}^n  \phi(b_i)$,}
\end{equation}
then} $\phi$ can be extended to an isomorphism from $L_1$ onto $L_2$.
\end{proposition}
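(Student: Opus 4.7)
The plan is to extend $\phi$ to all of $L_1$ by
\[\ephi(x) = \bigvee \bigset{\phi(a) : a\in\Jir{L_1},\ a\leq x}\]
for $x\in L_1$, and, symmetrically, to define $\epsi\colon L_2\to L_1$ from $\phi^{-1}$ by
\[\epsi(y) = \bigvee \bigset{\phi^{-1}(c) : c\in\Jir{L_2},\ c\leq y}.\]
The target is to show that $\ephi$ and $\epsi$ are mutually inverse order isomorphisms; for finite lattices, any such pair automatically consists of mutually inverse lattice isomorphisms, which finishes the proof.

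First I would strengthen the hypothesis \eqref{cp2djGt} to joins of arbitrary arity: for every $k\geq 1$ and all $a,b_1,\dots,b_k\in\Jir{L_1}$,
\[a\leq b_1\vee\cdots\vee b_k \iff \phi(a)\leq \phi(b_1)\vee\cdots\vee\phi(b_k).\]
If $k<n$, pad the list $b_1,\dots,b_k$ with copies of $b_1$ to length $n$ and invoke \eqref{cp2djGt} directly. If $k\geq n$, the forward implication uses \cprop n{} inside $L_1$ to reduce the inequality to an $n$-term sub-join, applies \eqref{cp2djGt}, and enlarges the right-hand side back; the reverse implication uses \cprop n{} inside $L_2$ in the symmetric way. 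Specializing to $k=1$ yields $\phi(a)\leq\phi(b)\iff a\leq b$ on $\Jir{L_1}$, so $\phi$ is an order-isomorphism between $\Jir{L_1}$ and $\Jir{L_2}$ and, in particular, $\ephi(a)=\phi(a)$ for every $a\in\Jir{L_1}$.

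Next I would check that $\epsi\circ\ephi=\id{L_1}$; the equality $\ephi\circ\epsi=\id{L_2}$ is symmetric. Using the standard fact that every element of a finite lattice equals the join of the join-irreducibles below it, one inclusion is immediate: for each $b\in\Jir{L_1}$ with $b\leq x$ we have $\phi(b)\leq\ephi(x)$ by definition, whence $b=\phi^{-1}(\phi(b))\leq\epsi(\ephi(x))$; joining over such $b$ gives $x\leq\epsi(\ephi(x))$. Conversely, pick any $c\in\Jir{L_2}$ with $c\leq\ephi(x)=\bigvee\bigset{\phi(b):b\in\Jir{L_1},\ b\leq x}$, and apply the extended equivalence from the previous paragraph in the direction from $L_2$ back to $L_1$ to obtain $\phi^{-1}(c)\leq\bigvee\bigset{b:b\in\Jir{L_1},\ b\leq x}\leq x$. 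Taking the join over all such $c$ yields $\epsi(\ephi(x))\leq x$. Order-preservation of $\ephi$ and $\epsi$ is immediate from their definitions.

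The main obstacle is the first step, namely upgrading \eqref{cp2djGt} from $n$-ary joins to joins of arbitrary arity, since this is precisely where \cprop n{} in \emph{both} lattices is called upon. Once that upgrade is in hand, the remaining verifications amount to unwinding the definitions of $\ephi$ and $\epsi$ together with the standard fact on join-irreducibles already mentioned.
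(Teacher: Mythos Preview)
Your proof is correct and follows essentially the same route as the paper's: upgrade \eqref{cp2djGt} to arbitrary arity via \cprop n, define $\ephi$ and $\epsi$ exactly as you do, and verify they are mutually inverse order-preserving maps. If anything, you spell out the verification of $\epsi\circ\ephi=\id{L_1}$ more carefully than the paper, which simply calls it ``routine.''
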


\begin{proof} \refegy{First, we show that $n$ is not relevant in \eqref{cp2djGt}, that is,  \eqref{cp2djGt} implies the following property of $\phi$:
\begin{equation}\label{cpbmdjGt}
\text{for all $a\in\Jir{L_1}$ and $B\subseteq \Jir{L_1}$, 
$\,\,\,a\leq \bigvee B  \iff  \phi(a)\leq\bigvee \phi(B)$,}
\end{equation}
where $\phi(B)=\set{\phi(b):b\in B}$. Assume that $a\leq \bigvee B$. By \cprop n, there exists a subset $C\subseteq B$ such that $|C|\leq n$ and $a\leq\bigvee C$. Hence $ \phi(a)\leq\bigvee \phi(C)$ by \eqref{cp2djGt}, and $ \phi(a)\leq\bigvee \phi(B)$. The converse implication in \eqref{cpbmdjGt} is obtained by using $\phi^{-1}$. 
For the sake of Remark~\ref{latisorem} coming soon, 
we note that the rest of the proof relies only on \eqref{cpbmdjGt} and does not use any specific property of $L_1$ and $L_2$.}

Next, let $\psi\colon \Jir{L_2}\to\Jir{L_1}$ denote the inverse of $\phi$. We define a map $\ephi\colon L_1\to L_2$ by $\ephi(x)=\bigvee\set{\phi(a):a\in \ideal x\cap\Jir{L_1}}$.
Similarly, let $\epsi\colon L_2\to L_1$ be defined by $\epsi(y)=\bigvee\set{\psi(b):b\in \ideal y\cap\Jir{L_2}}$.
The choice \refegy{$b_1=\dots=b_n$} in \eqref{cp2djGt} shows that $\phi$ and $\psi$ are  order isomorphism. This implies that $\ephi$ and $\epsi$ are extensions of $\phi$ and $\psi$, respectively. \refegy{Using the formula $x=\bigvee(\ideal x\cap \Jir{L_i})$ for $x\in L_i$, it is routine to check that 
$\epsi$ and $\psi$ are reciprocal bijections.
Hence, they are lattice isomorphisms since they are  obviously order-preserving.} 
\end{proof}

\begin{remark}\label{latisorem}
 If $L_1$ and $L_2$ are finite lattices and $\phi\colon\Jir{L_1}\to \Jir{L_2}$ is a bijection satisfying \eqref{cpbmdjGt}, then $L_1\cong L_2$ and  $\phi$ extends to an isomorphism $L_1\to L_2$.
\end{remark}

\subsection{More about dually slim, lower semimodular lattices}
Even if $\phi$ in Proposition~\ref{jirmaprop} is an order-isomorphism,  \eqref{cp2djGt} \refegy{with $n=2$}  
may fail; its satisfaction depends mainly on the case where $\set{a,\refegy{b_1,b_2}}$ is an antichain. This is one of the reasons why we are going to have a closer look at dually slim, lower semimodular lattices.  Since dually slim lattices are planar by Lemma~\ref{slimplanar}(B), the propositions of this subsection  may look intuitively clear. However, their exact  proofs need some preparation.
Fortunately, the theory of planar lattices is satisfactorily developed in  \init{D.\ }Kelly and \init{I.\ }Rival \cite{kellyrival} at a rigorous level, so we can often rely on results from \cite{kellyrival} instead of going into painful rigorousity. Whenever we deal with a planar lattice, always a fixed planar diagram is assumed. Actually, most of the concepts, like left and right, depend on the planar diagram chosen (sometimes implicitly) at the beginning rather than on the lattice. This will not cause \skc{any} trouble  since our arguments do not depend on the choice of planar diagrams.

Now,  we recall some necessary concepts and statements for planar lattices; the reader may (but need not) look into \cite{kellyrival} for more exact details.
Let $C$ be a maximal chain  in a finite planar lattice $L$ (with a fixed planar diagram). This  chain cuts $L$ into
a \emph{left side} and a \emph{right side},  see \init{D.\ }Kelly and \init{I.\ }Rival \cite[Lemma 1.2]{kellyrival}. The
intersection of these sides is $C$. If $x\in L$ is on the left side of $C$ but not in $C$, then $x$ is \emph{strictly on the left} of $C$. Let $D$ be another maximal chain of $L$. If all elements of $D$ are on the left of $C$, then $D$ is \emph{on the left of} $C$. In this sense, we can speak of the leftmost maximal chain of $L$, called the \emph{left boundary chain}, and the rightmost maximal chain, called the \emph{right boundary chain}. The union of these two chains is the \emph{boundary} of $L$.
Also, if $E$ is a (not necessarily maximal) chain of $L$, then the \emph{leftmost maximal chain through} $E$ (or extending $E$) and the rightmost one make sense. 
If $E=\set{e_1<\dots <e_n}$, then the leftmost maximal chain of $L$ through $E$ is the union of the left boundary chains of the intervals $[0,e_1\refket]$, $[e_1,e_2]$, \dots, $[e_{n-1},e_n]$, and $[e_n,1]$. (The diagrams of these intervals are the respective subdiagrams of the fixed diagram of $L$.) If $E=\set e$ is a singleton, then  chains containing $e$ are said to be chains through $e$ rather than chains through $\set e$. 
The most frequently used result of \init{D.\ }Kelly and \init{I.\ }Rival  \cite{kellyrival} is probably the following one.

\begin{lemma}[{\init{D.\ }Kelly and \init{I.\ }Rival  \cite[Lemma 1.2]{kellyrival}}]
\label{krchainlemma}
Let $L$ be a finite planar lattice, and let $x\leq y\in L$. If $x$ and $y$ are on different sides of a maximal chain $C$ in $L$, then there exists an element $z\in C$ such that $x\leq z\leq y$.
\end{lemma}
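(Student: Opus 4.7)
The plan is to induct on the length $\ell([x,y])$ of the interval $[x,y]$, treating the trivial case $x\in C$ or $y\in C$ separately (in which one simply takes $z=x$ or $z=y$). The substantive case is when $x$ is strictly on one side of $C$ and $y$ strictly on the other; say, $x$ is strictly on the left and $y$ strictly on the right. The base case $\ell([x,y])=0$ would force $x=y$, which cannot satisfy the strictness hypothesis, so it is vacuous.

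First I would rule out the covering case $x\prec y$ under these hypotheses. In the fixed planar diagram, the edge representing $x\prec y$ is a simple arc in the plane whose interior meets no other vertex or edge of the diagram, while $C$, together with pieces of the left and right boundary chains of $L$, separates the strict left side of $C$ from the strict right side. By a Jordan-type separation argument, the covering arc from $x$ to $y$ must meet $C$; since it cannot cross an edge of $C$ transversally in its interior, the meeting point is a vertex of $C$, forcing $x\in C$ or $y\in C$, which contradicts strictness. Hence the cover case is vacuous and there exists some $w$ with $x<w<y$. If $w\in C$ we set $z=w$ and are done. Otherwise $w$ lies strictly on one of the two sides: if $w$ is on the same side as $x$, then $w$ and $y$ lie on strictly opposite sides, and the induction hypothesis applied to $[w,y]$ (whose length is strictly smaller than $\ell([x,y])$) yields $z\in C$ with $w\le z\le y$, whence $x\le z\le y$. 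The symmetric subcase, in which $w$ is on the same side as $y$, is handled by applying induction to $[x,w]$ instead.

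The main obstacle is making the Jordan-type separation step in the cover case fully rigorous: one must verify that ``strictly on the left'' and ``strictly on the right'' correspond exactly to the two open connected components of the plane minus the union of $C$ with the left and right boundary chains of $L$, and that every cover edge of a planar Hasse diagram is a simple arc missing all other edges and vertices. This is precisely the kind of geometric-combinatorial infrastructure developed by Kelly and Rival in \cite{kellyrival}; a self-contained proof would have to reproduce that machinery from scratch, whereas here one may simply invoke \cite[Lemma~1.2]{kellyrival}, as the paper does.
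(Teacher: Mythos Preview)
The paper does not give its own proof of this lemma: it is stated as a quotation from Kelly and Rival \cite[Lemma~1.2]{kellyrival} and used as a black box. So there is no in-paper argument to compare your proposal against, and you yourself note this at the end of your sketch.

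As a standalone outline, your induction on $\ell([x,y])$ is a sound strategy, and you have correctly isolated the one nontrivial step: showing that a covering edge cannot connect a vertex strictly left of $C$ to one strictly right of $C$. Your honest assessment that this step requires the planar-diagram infrastructure of \cite{kellyrival} (the precise definition of ``left side'' and ``right side'' as connected regions, and the non-crossing property of edges in a planar Hasse diagram) is exactly right; without reproducing that machinery, the argument remains a plausible sketch rather than a proof. Since the present paper simply cites the result, your proposal is adequate for this context, though if you wanted a self-contained write-up you would need to spell out the separation argument in the cover case rather than defer to a Jordan-type intuition.
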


Next, let $x$ and $y$ be elements of a finite planar lattice $L$, and assume that they are incomparable, in notation, $x\parallel y$. If $x\vee y$ has lower covers $x_1$ and $y_1$ such that $x\leq x_1\prec x\vee y$, $y\leq y_1\prec x\vee y$, and $x_1$ is on the left of $y_1$, then the element $x$ is \emph{on the left} of the element $y$. If $x$ is on the left of $y$, then we say that $y$ is \emph{on the right} of $x$. Let us emphasize that whenever left or right is used for two elements, then the elements in question are incomparable. 

\begin{lemma}[{\init{D.\ }Kelly and \init{I.\ }Rival  \cite[Propositions 1.6 and 1.7]{kellyrival}}]\label{leftrightlemma} Let $L$ be finite planar lattice\skc{. If}  $\,x,y\in L$ \skc{are} incomparable elements\skc{, t}hen the following hold.
\begin{itemize}
\item[(A)] $x$ is on the left of $y$ if{f} $x$ is on the left of some maximal chain through $y$ if{f} $x$ is on the left of all maximal chains through $y$.
\item[(B)] Either $x$ is on the left of $y$, or $x$ is on the right of $y$.
\item[(C)] If $z\in L$,  $x\parallel y$,  $y\parallel z$,  $x$ is on the left of $y$, and $y$ is on the left of $z$, then $x$ is on the left of $z$.
\end{itemize}  
\end{lemma}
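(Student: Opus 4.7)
The plan is to exploit the ``cutting'' property of Lemma~\ref{krchainlemma}: any maximal chain $C$ in a finite planar lattice separates $L$ into closed left and right halves whose intersection is exactly $C$, and comparable elements on opposite sides are joined via some element of $C$. All three parts reduce to bookkeeping with this property together with the planar order on the lower covers of a fixed element. Throughout, ``left of a chain'' is meant strictly, that is, not on the chain itself.

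For (A), in the forward direction I would take witnesses $x_1,y_1$ of $x$ being on the left of $y$, so that $x\leq x_1\prec x\vee y$, $y\leq y_1\prec x\vee y$, and $x_1$ lies to the left of $y_1$. Concatenate a maximal chain in $[0,y_1]$ containing $y$, the cover $y_1\prec x\vee y$, and a maximal chain in $[x\vee y,1]$ to obtain a maximal chain $D$ through $y$. If $x$ were on the right of $D$, Lemma~\ref{krchainlemma} applied to $x\leq x\vee y$ would yield an element $z\in D\cap [x,x\vee y]$, forcing $z=y_1$ and hence $x\leq y_1$, contradicting $x_1\neq y_1$. Conversely, if $x$ is strictly left of a maximal chain $D$ through $y$, then the element of $D$ covered by $x\vee y$ in $D$ is a lower cover $y_1\geq y$ of $x\vee y$, and some lower cover $x_1\geq x$ of $x\vee y$ must lie strictly to its left in the planar order. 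The equivalence ``some chain'' $\Leftrightarrow$ ``every chain'' follows because any two maximal chains through $y$ agree at $y$, so Lemma~\ref{krchainlemma} restricted to $[0,y]$ and $[y,1]$ shows that $x$ cannot switch sides. Part (B) is then immediate: any maximal chain $D$ through $y$ does not contain $x$ (else $x$ and $y$ would be comparable), so $x$ lies strictly on one side of $D$; (A) converts this to exactly one of ``left of $y$'' or ``right of $y$''.

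The main obstacle is (C). My plan is to fix a maximal chain $D_y$ through $y$---by (A), $x$ is strictly left of $D_y$ and $z$ strictly right---then pick a maximal chain $D_z$ through $z$ and argue that $x$ is strictly left of $D_z$, so that (A) applied to $z$ yields $x$ on the left of $z$. If instead $x$ were right of $D_z$, Lemma~\ref{krchainlemma} applied to $x\leq x\vee z$ would provide $w\in D_z$ with $x\leq w\leq x\vee z$. A second application of Lemma~\ref{krchainlemma} to $x\leq w$, which now straddles $D_y$ since $x$ is strictly left and $w$ lies on the right side (via $z$), would force an intermediate element of $D_y$ between $x$ and $w$ that is comparable to $y$ in a direction incompatible with either $x\parallel y$ or $y\parallel z$, after a short case analysis according to whether this intermediate lies above or below $y$. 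The delicate point is that ``on the left of $y$'' is defined \emph{locally} through the covers of the join $x\vee y$, while the argument needs the \emph{global} topological notion ``left of a chain''; the whole proof is essentially a careful translation between these two pictures using Lemma~\ref{krchainlemma} as the bridge.
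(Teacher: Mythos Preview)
The paper does not prove Lemma~\ref{leftrightlemma} at all: it is quoted verbatim from Kelly and Rival \cite[Propositions 1.6 and 1.7]{kellyrival} and used as a black box, so there is nothing in the paper to compare your argument against. Your proposal is therefore an attempt to \emph{reprove} a cited result rather than to reconstruct the authors' own reasoning.

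As to the argument itself, the strategy---translate the local ``left of $y$'' (via lower covers of $x\vee y$) into the global ``left of a chain through $y$'' using Lemma~\ref{krchainlemma}---is the right one, but several steps do not go through as written. In the forward direction of (A), your application of Lemma~\ref{krchainlemma} to $x\le x\vee y$ is vacuous: $x\vee y$ already lies on $D$, so the lemma hands you $z=x\vee y$ and nothing is forced; in particular ``forcing $z=y_1$'' is unjustified. In the converse direction you assume ``the element of $D$ covered by $x\vee y$ in $D$'' exists, but a maximal chain through $y$ need not pass through $x\vee y$ at all, so this element may not be defined; and even when it is, the existence of a lower cover $x_1\ge x$ \emph{strictly to the left} of $y_1$ is asserted, not argued. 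Finally, in (C) the phrase ``after a short case analysis'' covers exactly the nontrivial content: one has to rule out $x\nonparallel z$ (which is possible and must be handled) and then control how the chains $D_y$ and $D_z$ interact; the contradiction you sketch from the intermediate element of $D_y$ being comparable to $y$ does not obviously materialize in either case. If you want a self-contained proof, it is cleaner to follow Kelly--Rival and work directly with the planar left--right order on the covers of each element, rather than routing everything through Lemma~\ref{krchainlemma}.
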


If $\set{x_0,x_1,y}$ is a 3-element antichain such that  $x_i$ is on the left of $y$ and $y$ is on the left of $x_{1-i}$ for some (necessarily unique) $i\in \set{0,1}$, then $y$ is \emph{\skc{horizontally} between} the elements $x_0$ and $x_1$.

\begin{proposition}\label{propokoztes} Let 
$L$ be a finite lattice. \skc{If} $\set{x_0,x_1,y}$ is a 3-element antichain  in $L$\skc{, then} the following two statements hold. 
\begin{itemize}
\item[(A)]\label{lemmabetweenjoina} If $L$ is planar and 
$y$ is \skc{horizontally} between $x_0$ and $x_1$, then $y\leq x_0\vee x_1$.
\item[(B)]\label{lemmabetweenjoinb} If $L$ is a dually slim, lower semimodular lattice and $y\leq x_0\vee x_1$, then 
 $y$ is \skc{horizontally} between $x_0$ and $x_1$.
\end{itemize}
\end{proposition}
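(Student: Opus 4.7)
My plan is to prove both parts using the planar-lattice machinery of Lemmas~\ref{krchainlemma} and \ref{leftrightlemma}, reinforced in part (B) by the ``at most two lower covers'' conclusion of Lemma~\ref{slimplanar}(B). For (A) fix a maximal chain $D$ through $y$. Lemma~\ref{leftrightlemma}(A) translates the hypothesis that $y$ is horizontally between into: $x_0$ is strictly on the left of $D$ and $x_1$ strictly on the right (or vice versa). Set $z:=x_0\vee x_1$. If $z\in D$, then $z$ is comparable with $y$ via $D$, and $z<y$ would force $x_0\leq z<y$, against $x_0\parallel y$; so $y\leq z$. If $z$ is strictly on one side of $D$, it lies on the opposite side from exactly one of $x_0,x_1$, say $x_i$; then Lemma~\ref{krchainlemma} delivers $w\in D$ with $x_i\leq w\leq z$, and comparability of $w$ with $y$ in $D$ rules out $w\leq y$ (lest $x_i\leq y$), giving $y\leq w\leq z$.

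For (B) set $z:=x_0\vee x_1$. Since $x_0,x_1<z$ cannot both lie under a single lower cover of $z$, Lemma~\ref{slimplanar}(B) forces $z$ to have exactly two lower covers $u,v$, one containing $x_0$ and the other $x_1$, and Lemma~\ref{leftrightlemma}(B) lets me choose labels so that $u$ is on the left of $v$, $x_0\leq u$, and $x_1\leq v$. The key subsidiary step is a \emph{transfer lemma}: whenever $a\leq u$, $b\leq v$, $a\parallel b$, and $a\vee b=z$, then $a$ is on the left of $b$. Its proof mirrors (A): assume $b$ on the left of $a$ toward a contradiction, take a maximal chain $C$ through $b$ and $v$, so Lemma~\ref{leftrightlemma}(A) places $a$ strictly on the right of $C$ and $u$ strictly on the left of $C$; Lemma~\ref{krchainlemma} applied to $a\leq u$ on opposite sides of $C$ furnishes $w\in C$ with $a\leq w\leq u$, and comparability of $w$ with $b$ in $C$ excludes both $w\leq b$ (forces $a\leq b$) and $w\geq b$ (forces $b\leq u$, whence $z=a\vee b\leq u$, absurd). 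Applied to $\{x_0,x_1\}$, the transfer lemma confirms that $x_0$ is on the left of $x_1$.

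To prove (B), suppose for contradiction that $y$ is not horizontally between $x_0$ and $x_1$. Lemma~\ref{leftrightlemma}(C) linearly orders the antichain by ``on the left of'', so $y$ is extremal, and by the left--right symmetry of the planar diagram I may assume $x_1$ is on the left of $y$. Since $y<z$, either $y\leq u$ or $y\leq v$. In the generic subcase $y\leq u$ and $y\not\leq v$, the transfer lemma applied to $\{y,x_1\}$ (their join is $z$ because $x_1\not\leq u$ and $y\not\leq v$) yields $y$ on the left of $x_1$, the desired contradiction.

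The main obstacle is the remaining case $y\leq v$ (including the degenerate subcase $y\leq u\wedge v$), in which $y$ and $x_1$ share a lower cover of $z$, so the transfer lemma cannot position them against each other directly. Here my plan is to induct on $|L|$: inside the strictly smaller ideal $[0,v]$, which is again dually slim and lower semimodular by inheritance, either $y\vee x_1=v$ and the lower covers of $v$ split $y$ and $x_1$ so the transfer lemma applied inside $[0,v]$ delivers the contradiction, or $y\vee x_1<v$ and the same analysis recurses inside $[0,y\vee x_1]$. A delicate ingredient is the coherence of the ``on the left of'' relation between $L$ and its ideals, which I would justify from the planar-diagram definitions in \cite{kellyrival} cited before Lemma~\ref{krchainlemma}.
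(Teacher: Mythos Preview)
Your argument for Part~(A) is correct and essentially the paper's proof, with a harmless extra case split. For Part~(B), your ``transfer lemma'' is correct (indeed, when $z$ has exactly two lower covers it is almost the definition of ``on the left of''), and your ``generic subcase'' $y\leq u$, $y\not\leq v$ is handled correctly; this corresponds to the paper's first case $y\vee x_1=x_0\vee x_1$.

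The gap is in the remaining case $y\leq v$, i.e.\ $y\vee x_1<z$. Your recursion into $[0,v]$ (or further into $[0,\,y\vee x_1]$) loses $x_0$, and once $x_0$ is gone there is nothing left to contradict. Concretely, suppose the recursion terminates with $y\vee x_1=v$ and the two lower covers $u',v'$ of $v$ (say $u'$ on the left of $v'$) split $y$ and $x_1$. Your transfer lemma then says only: whichever of $y,x_1$ lies under $u'$ is on the left of the other. But you are \emph{assuming} $x_1$ is on the left of $y$, and nothing you have recorded prevents $x_1\leq u'$ and $y\leq v'$; in that situation the transfer lemma merely confirms your assumption rather than contradicting it. The same failure propagates through every level of the recursion. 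In short, the information ``$x_0$ is on the far left'' is what ultimately forces the contradiction, and your descent discards it.

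The paper's treatment of this case is genuinely different: working in the interval $I=[x_0\wedge y,\,x_0\vee x_1]$ (which contains all three elements, by the dual of Part~(A)), it carves out a cover-preserving sublattice $K$ bounded by a maximal chain $F$ through $x_0$ on the left and a maximal chain $G$ through $\{y,\,y\vee x_1\}$ on the right, and then invokes the structural fact (from \cite{czgschvisual}) that in a dually slim lower semimodular lattice every meet-irreducible lies on a boundary chain. This yields $x_1=f\wedge g$ with $f\in F$, $g\in G$, and the positions of $f$ and $g$ relative to $x_0$ and $y\vee x_1$ produce the contradiction. Some comparable use of $x_0$ (or a structural substitute for it) is needed to close your argument.
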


Note that it would  be unreasonable to tailor the same condition on $L$ in Parts (A) and (B). The 5-element, modular, non-distributive lattice $M_3$ indicates that planarity in itself would not be sufficient in Part (B). On the other hand, although dual slimness (with or without lower semimodularity) would be sufficient in Part (A) by  Lemma~\ref{slimplanar}, in this case the statement would be weaker and we could not use the dual of Part (A) in the proof of Part (B).

\begin{proof}[Proof of Proposition \ref{propokoztes}] 
To prove (A), pick a maximal chain $C$ through $y$. Without loss of generality,  we can assume that $x_0$ and $x_0\vee x_1$ are on the left and $x_1$ is on the right of $C$. Applying Lemma \ref{krchainlemma}, there exists an element $z\in C$ such that $x_1\leq z\leq x_0\vee x_1$. Belonging to the same chain, $y$ and $z$ are comparable. Since $z\leq y$ would contradict $x_1\not\leq y$, we have $y<z\leq  x_0\vee x_1$, proving (A).

Next, to prove (B) by contradiction, suppose that $L$ is a dually slim\skc{, lower semimodular lattice}, $y\leq x_0\vee x_1$, but $y$ is not between $x_0$ and $x_1$. Let, say, $x_0$ be on the left of $x_1$ and $x_1$ be on the left of $y$. 
The interval $I=[x_0\wedge y, x_0\vee x_1]$ contains $x_0$, $x_1$ and $y$ since $y\leq x_0\vee x_1$ and, by the dual of Part (A), $x_0\wedge y\leq x_1$. As an interval of $L$, it is lower semimodular, and it follows from Lemma~\ref{slimplanar}(A) that this interval is dually slim. There are two cases.

First, assume $y\vee x_1=x_0\vee x_1$. Let $E$ be a maximal chain of $I$ through $\set{x_1, x_0\vee x_1}$. By Lemma~\ref{leftrightlemma}(A), $x_0$ is on the left of $E$ and $y$ is on the right of $E$. Note that the left side of $E$ (including $E$ itself) is a cover-preserving sublattice by \init{D.\ }Kelly and \init{I.\ }Rival \cite[Proposition 1.4]{kellyrival}; this can also be derived from  Lemma~\ref{krchainlemma} easily. Hence, 
we can pick lower covers $x_0'$, $x_1'$ of $x_0\vee x_1$ on the left of $E$
such that $x_0\leq x_0'$ and $x_1\leq x_1'$. Similarly, let $y'\in I$ be a lower cover of $x_0\vee x_1$ such that $y\leq y'$ and $y'$ is on the right of $E$. 
If we had $x_0'=y'$, then $E$ would contain an element $z$ by Lemma~\ref{krchainlemma}, necessarily strictly above $x_1$ since  $y\not\leq x_1$, such that $y\leq z\leq x_0'$, which would lead to the following contradiction:
\[\text{$y\vee x_1\leq z\leq x_0'\prec x_0\vee x_1=y\vee x_1$.}
\] 
Hence, $x_0'\neq y'$. If we had $x_0'=x_1'$ or $x_1'=y'$, then $x_0\vee x_1\leq x_1'\prec x_0\vee x_1$ or 
$y\vee x_1\leq x_1'\prec x_0\vee x_1=y\vee x_1$, respectively, would be a contradiction. Hence $x_0'$, $x_1'$ and $y'$ are three distinct lower covers of $x_0\vee x_1$, which contradicts Lemma~\ref{slimplanar}(B).

Second, assume $y\vee x_1\neq x_0\vee x_1$. Since these elements are in $I$, we have $y<y\vee x_1<x_0\vee x_1$. 
Take a maximal chain $G$ of $I$ through $\set{y, y\vee x_1}$, and let $J$ be the left side of $G$. By Lemma~\ref{leftrightlemma}(A), $x_0,x_1\in J$. In $J$, take a maximal chain $F$ through $x_0$, and let $K$ be the right side of $F$ in $J$. By Lemma~\ref{leftrightlemma}(A) again, $x_1\in K$. 
\skc{Clearly,} $K$ is a cover-preserving sublattice of $I$, again by \init{D.\ }Kelly and \init{I.\ }Rival  \cite[Propositions 1.4]{kellyrival}. 
Now $F$ and $G$ are the left and  right boundary chains of $K$, respectively. Like we obtained it for $I$, we conclude that $K$, which is a cover-preserving sublattice of $I$ (and also of $L$),  is a dually slim, lower semimodular lattice. By \init{G.\ }Cz\'edli and \init{E.\,T.\ }Schmidt \cite[Lemma 6]{czgschvisual}, $\Mir K\subseteq F\cup G$. Therefore, there are $f\in F$ and $g\in G$ such that $x_1=f\wedge g$. 
Since $f\leq x_0$ is excluded by $x_1\not\leq x_0$ and $F$ is a chain, $x_0< f$. Similarly, using that $G$ is a chain and $y\parallel x_1$, we obtain  $y < g$. 
Thus  $y\vee x_1\leq g$, and we conclude $x_1=f\wedge (y\vee x_1)$.
Clearly, $f\neq x_0\vee x_1$ since otherwise $x_1=(x_0\vee x_1)\wedge (y\vee x_1)= y\vee x_1 \geq y$ would contradict $x_1\parallel y$. Hence $f< x_0\vee x_1$, which leads to the contradiction 
$x_0\vee x_1\leq f<x_0\vee x_1$. 
\end{proof}

We also need the following statement.

\begin{proposition}\label{prszdobzF}
If $L$ \refegy{is} a finite  lower semimodular lattice, $a\in \Jir L$, $b,c\in L$, $c<a$, and $a\leq b\vee c$, then $a\leq b$.
\end{proposition}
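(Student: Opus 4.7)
Since $a\in\Jir L$ has a unique lower cover $a_\ast$, the hypothesis $c<a$ forces $c\le a_\ast$, so $a\le b\vee c\le b\vee a_\ast$, and it therefore suffices to prove the sharper statement that $a\le a_\ast\vee b$ implies $a\le b$. Arguing by contradiction, assume $a\not\le b$. Then $a\wedge b<a$, and the characterization $\ideal a=\ideal{a_\ast}\cup\set a$ of a join-irreducible element forces $a\wedge b\le a_\ast$. Hence $a\wedge b=a_\ast\wedge b$, and combined with $a\le a_\ast\vee b$ one readily checks $a\vee b=a_\ast\vee b=:t$.

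From here my plan is to induct on the length $n$ of the interval $[a_\ast,t]$. In the base case $n=1$ one has $t=a$, so $b\le a$ and $b\not\le a_\ast$; join-irreducibility of $a$ then forces $b=a$, contradicting $a\not\le b$. For the inductive step ($n\ge 2$), pick a lower cover $t_1$ of $t$ with $a\le t_1$, which is possible because $a<t$ implies that $a$ lies below some lower cover of $t$. The interval $[a_\ast,t_1]$ is strictly shorter, remains lower semimodular, and I would aim to supply a replacement $b_1\le b$ satisfying $a\le a_\ast\vee b_1\le t_1$, so that the inductive hypothesis applied inside $[a_\ast,t_1]$ yields $a\le b_1\le b$.

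The main obstacle is constructing such a $b_1$. The natural candidate $b_1:=b\wedge t_1$ fulfills $a_\ast\vee b_1\le t_1$ automatically, but verifying $a\le a_\ast\vee(b\wedge t_1)$ would ordinarily require the modular identity $(a_\ast\vee b)\wedge t_1=a_\ast\vee(b\wedge t_1)$, which may fail in a merely lower semimodular lattice. To circumvent this, I would invoke lower semimodularity applied to the covering $t_1\prec t$ together with the ``diamond'' property of lower semimodular lattices (if $t_1,t_2$ are two distinct lower covers of $t$, then $t_1\wedge t_2\prec t_i$ for $i=1,2$), and exploit the rigidity imposed by $a\in\Jir L$. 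The case where $t$ has a unique lower cover must be treated separately (since then $t\in\Jir L$), reducing via an exchange of roles between $a$ and $t$ to an argument on lattice height. This combined ``diamond plus join-irreducibility'' step is the crux of the proof.
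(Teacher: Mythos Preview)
Your proposal has a genuine gap: the inductive step is never carried out. You yourself identify the construction of the replacement element $b_1$ as ``the main obstacle'' and ``the crux of the proof,'' but you then only gesture at ``invoking lower semimodularity'' and a ``diamond property'' without actually producing $b_1$ or verifying $a\le a_\ast\vee b_1$. As you note, the natural candidate $b_1=b\wedge t_1$ would need the modular identity $(a_\ast\vee b)\wedge t_1=a_\ast\vee(b\wedge t_1)$, which can fail in a merely lower semimodular lattice; nothing in your sketch explains how to bypass this. The separate treatment of the case where $t$ is join-irreducible is likewise only announced, not done.

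The paper's argument avoids induction entirely by a single application of lower semimodularity, and the key difference is the choice of lower cover. Instead of choosing a lower cover of $d:=b\vee c$ lying \emph{above $a$}, one checks $b<d$ and picks $e\in[b,d]$ with $e\prec d$, i.e.\ a lower cover lying \emph{above $b$}. A short verification gives $e\parallel a$ and $e\parallel a_\ast$. Lower semimodularity applied to $e\prec d=e\vee a$ yields $e\wedge a\prec a$. Since $e\wedge a\le e$ while $a_\ast\not\le e$, the element $e\wedge a$ is a lower cover of $a$ distinct from $a_\ast$, contradicting $a\in\Jir L$. Going above $b$ rather than above $a$ is precisely what makes lower semimodularity manufacture a second lower cover of $a$ in one step, and is the idea your inductive scheme is missing.
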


\begin{proof} To prove the statement by contradiction, suppose that in spite of the assumptions, $a\not\leq b$. 
If we had $b<a$, then  $a\geq b\vee c$ together with  $a\leq b\vee c$ would contradict $a\in\Jir L$. Hence $b\parallel a$. Let $d=b\vee c$; we know that $a\leq d$. Since $b
\nonparallel c$ together with $b\vee c\geq a$ would contradict $\set{b,c}\cap\filter a=\emptyset$, we have $b\parallel c$. Hence, $b<d$ and we can pick an element $e\in [b,d]$ such that $e\prec d$.
Denoting the unique lower cover of $a$ by $\lowstar a$,  we conclude
\[ d=e\vee a=e\vee\lowstar a,\quad e\parallel a,\quad\text{and}\quad e\parallel \lowstar a,
\]
because of the following reasons: $c\leq \lowstar a$ yields 
$d=b\vee c\leq e\vee \lowstar a\leq e\vee a\leq d$; $e\leq \lowstar a$ or $e\leq a$ would contradict $b\parallel a$; and $e\geq \lowstar a$ or $e\geq  a$ would lead to the contradiction $e\prec d= e\vee\lowstar a =e$ or $e\prec d= e\vee a =e$.

Since $e\parallel a$, we have $e\wedge a <a$. Lower semimodularity yields $e\wedge a\preceq d\wedge a=a$, and we obtain $e\wedge a\prec a$. Since $\lowstar a\parallel e$ and $e\wedge a \leq e$, the elements $\lowstar a$ and $e\wedge a$ are two distinct lower covers of $a$. 
This contradicts $a\in\Jir L$.
\end{proof}

The elements of $\Jir L\cap\Mir L$ are called \emph{doubly irreducible elements}.
A principal filter $\filter b$ of $L$ is a \emph{prime filter} if $\skc{\emptyset} \neq L\setminus\filter b$ is closed with respect to joins or, equivalently, if $L\setminus\filter b$ is a lattice ideal of $L$.

\begin{proposition}\label{dirronbtop} Let $L$ be a finite, dually slim, lower semimodular lattice\skc{. If}  $|L|\ge 3$, then the following \skc{three} statements hold.
\begin{enumeratei}
\item\label{dirronbtopa} $L$ has a maximal doubly irreducible element $b$, and \skc{this} $b$ belongs to the boundary of $L$. 
\item\label{dirronbtopb} If $x\in L$ and $x>b$, then $x\in\Mir L$ but $x\notin \Jir L$. Furthermore, $\filter b$ is a chain.
\item\label{dirronbtopc} $\filter b$ is a prime filter of $L$.
\end{enumeratei}
\end{proposition}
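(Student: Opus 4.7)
For (i), my plan is to invoke the dual of the well-known corner-existence theorem for slim semimodular lattices from the Cz\'edli--Schmidt references cited in the introduction (provable by induction on $|L|$ via removal of a doubly irreducible corner): every finite dually slim lower semimodular lattice with at least three elements has a doubly irreducible element. Fix a maximal $b\in\Jir L\cap\Mir L$. By \cite[Lemma 6]{czgschvisual}, already used in the proof of Proposition~\ref{propokoztes}, $\Mir L$ is contained in the union of the two boundary chains of $L$, so $b$ lies on the boundary.

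For (ii), the key reduction is that it suffices to show $x\in\Mir L$ whenever $b<x<1$. Indeed, $x\notin\Jir L$ then follows from maximality of $b$ (else $x$ would be doubly irreducible above $b$), and $\filter b$ is a chain because starting from $b$ and repeatedly taking the unique upper cover produces $b\prec b^*\prec b^{**}\prec\cdots\prec 1$, which absorbs every element of $\filter b$ (any $c\in\filter b$ satisfies $c\geq b^*$ and the claim propagates inductively in $[b^*,1]$). I would argue $x\in\Mir L$ by contradiction: pick a minimal $x\in(b,1)$ with two upper covers $y_1,y_2$. Lower semimodularity applied to $y_1\prec y_1\vee y_2$ gives $y_1\wedge y_2=x$, and minimality forces $[b,x]$ to be a chain that coincides with the boundary segment of $L$ from $b$ up to $x$; hence $x$ lies on this boundary chain and exactly one of $y_1,y_2$ lies strictly off it. Re-running the corner construction of (i) inside a suitable sub-interval strictly above $b$ --- which inherits dual slimness and lower semimodularity as a cover-preserving sublattice via Lemma~\ref{slimplanar} --- then produces another doubly irreducible element of $L$ strictly above $b$, contradicting the maximality. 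This step is the main obstacle: making the ``re-running'' rigorous requires care, because double irreducibility in a sub-interval does not automatically coincide with double irreducibility in $L$, so one must track which lower covers (present in $L$ but not in the sub-interval) could spoil join-irreducibility.

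For (iii), suppose toward contradiction that $\filter b$ is not prime: there exist $c,d\in L\setminus\filter b$ with $c\vee d\geq b$. Proposition~\ref{prszdobzF}, applied with its $a,b,c$ playing the roles of our $b,d,c$, shows that $c<b$ would force $b\leq d$, contradicting $d\notin\filter b$; hence $c\parallel b$, and symmetrically $d\parallel b$. If $c,d$ were comparable, the larger of them would satisfy $\max(c,d)=c\vee d\geq b$, again contradicting the choice, so $c\parallel d$. Thus $\{b,c,d\}$ is a three-element antichain with $b\leq c\vee d$, and Proposition~\ref{propokoztes}(B) places $b$ horizontally between $c$ and $d$. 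This is impossible by (i), since $b$ lies on the boundary of $L$ and so no element can be strictly on one of its sides in the planar diagram; the required contradiction proves that $\filter b$ is prime.
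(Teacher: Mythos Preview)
Your argument for \eqref{dirronbtopa} is essentially the paper's, and your argument for \eqref{dirronbtopc} is correct and in fact cleaner than the paper's: the paper first decomposes $u$ and $v$ into join-irreducibles and invokes Carath\'eodory's condition \cprop 2 (Proposition~\ref{cpropos}) to reduce to $p,q\in\Jir L$, but as your write-up shows, neither Proposition~\ref{prszdobzF} nor Proposition~\ref{propokoztes}(B) requires the two lower elements to be join-irreducible, so that reduction is unnecessary.

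The genuine gap is in \eqref{dirronbtopb}. You correctly isolate the obstacle yourself: re-running the corner construction inside a sub-interval above $b$ produces an element that is doubly irreducible \emph{in that interval}, and while meet-irreducibility transfers up to $L$ (the interval is a filter), join-irreducibility need not, since a lower cover of the candidate in $L$ can lie outside the interval. You do not close this gap, and it is not clear how to close it along these lines without essentially reproving the structural facts the paper imports. The paper avoids the difficulty entirely with a different argument: assuming $b$ lies on the right boundary chain $C_r$, it first quotes the dual of \cite[Lemma~2.3]{czgasympt} to get $C_r\cap\filter b\subseteq\Mir L$, and then shows $\filter b\subseteq C_r$ directly. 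Namely, if $\filter b\not\subseteq C_r$ then $\filter b$ is not a chain, so one can take incomparable $u,v\in\filter b$, build covering chains from $b$ up to $u$ and to $v$, and locate the last common point $u_i=v_i$ on $C_r$; this point then has two distinct upper covers, contradicting $C_r\cap\filter b\subseteq\Mir L$. Once $\filter b\subseteq C_r$, the chain conclusion is immediate, every $x>b$ lies in $\Mir L$, and maximality of $b$ gives $x\notin\Jir L$. I recommend replacing your sketch for \eqref{dirronbtopb} with this boundary-chain argument.
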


\begin{proof} By Lemma~\ref{slimplanar} (B), $L$ is planar. We know from 
\init{D.\ }Kelly and \init{I.\ }Rival  \cite[Theorem 2.5]{kellyrival} that each finite planar lattice $L$ with at least three elements has a doubly irreducible element on its boundary. However, we only use this theorem to conclude 
$\Jir L\cap\Mir L\neq\emptyset$.
Hence, $\Jir L\cap\Mir L$ contains a maximal element, $b$. Since $\Mir L$ is a subset of the boundary by the dual of \init{G.\ }Cz\'edli and \init{E.\,T.\ }Schmidt \cite[Lemma ~6]{czgschvisual}, $b$ belongs to the boundary. This proves \eqref{dirronbtopa}.

Let, say, $b$ belong to the right boundary chain $C_r$ of $L$. The dual of \init{G.\ }Cz\'edli \cite[Lemma 2.3]{czgasympt} asserts $C_r\cap\filter b\subseteq \Mir L$. We claim that $\filter b\subseteq C_r$. To prove this by contradiction, suppose 
$\filter b\not\subseteq C_r$. \skc{We obtain that} $\filter b$ is not a chain  since $\filter b\cap C_r$ is a maximal chain in $\filter b$. Hence, there are $u,v\in\filter b$ such that $u\parallel v$, and there are chains $b=u_0\prec u_1\prec\dots\prec u_t=u$ and $b=v_0\prec v_1\prec\dots\prec v_s=v$ in $\filter b$. Clearly, $s,t\geq 1$. Let $i$ be the largest subscript such that $i\leq t$, $i\leq s$, and  $u_i=v_i\in C_r$. This $i$ exists since $u_0=v_0=b\in C_r$. 
Since $u\parallel v$, we have $i< s$ and $i< t$. There are two cases. First, if $u_{i+1}\neq v_{i+1}$, then $u_i=v_i$ has at least two distinct covers. Second, if $u_{i+1}= v_{i+1}\notin C_r$, then $u_i=v_i$ has at least two distinct covers again:  $u_{i+1}= v_{i+1}$ and a cover belonging to $C_r$. Hence, in both cases, $u_i=v_i$ is meet-reducible and belongs to $C_r$, which contradicts  $C_r\cap\filter b\subseteq \Mir L$. Therefore, $\filter b\subseteq C_r$ and, since subsets of chains are chains, $\filter b$ is a chain. 

Now, assume $x>b$. Since $x\in \filter b=C_r\cap \filter b\subseteq \Mir L$ and $b$ was a maximal doubly irreducible element, we conclude  $x\notin\Jir L$. This proves \eqref{dirronbtopb}.

To prove \eqref{dirronbtopc} by contradiction, suppose $y\in \filter b$ such that there exist elements $u,v\in  L\setminus \filter b$ with $y=u\vee v$. 
\skc{We have} $u=u_1\vee \dots \vee u_s$ and $v=v_1\vee\dots \vee  v_t$ for some $u_1,\dots, u_s,v_1,\dots,v_t\in \Jir L\setminus \filter b$. It follows from Proposition~\ref{cpropos} that there are two elements in $\set{u_1,\dots, u_s,v_1,\dots,v_t}$ whose join belongs to $\filter b$. Therefore, there are incomparable elements $p,q\in \Jir L\setminus \filter b$ such that $b\leq p\vee q$.  There are two cases. First, assume that $\set{p,q,b}$ is an antichain. \skc{Clearly,} none of $p$ and $q$ belongs to $C_r$, and Lemma~\ref{leftrightlemma}(A) yields that both $p$ and $q$ are on the left of $b$. This is a contradiction since Proposition~\ref{propokoztes}(B) implies that $b$ is \skc{horizontally} between $p$ and $q$.

Second, assume that $\set{p,q,b}$ is not an antichain. Since $b$ is join-irreducible and $p,q\notin \filter b$, we cannot have $\set{p,q}\subseteq \ideal b$. Hence, apart from $p$-$q$ symmetry, $p\parallel b$ and $q<b$. However, \skc{now} Proposition~\ref{prszdobzF} contradicts $b\leq p\vee q$.
\end{proof}

\subsection{The rest of the proof} Before formulating the last auxiliary statement towards  Theorem~\ref{thmmain}, remember that dual slimness implies planarity by Lemma~\ref{slimplanar}(B).

\begin{lemma}\label{lMlStm} Let $L$ be a finite, dually slim, semimodular lattice with a fixed planar diagram, and let $F$ be  a finite concave set of collinear circles in the plane. Assume that we have a bijective map $\psi\colon \Jir L\to F$ such that for any $u,v\in \Jir L$,  
\begin{enumeratei}
\item\label{lMlStma} $u\leq v$ if and only if $\psi(u)\subseteq \rhullo2(\psi(v))$, and
\item\label{lMlStmb}  $u\parallel v$ and $u$ is on the left of $v$ if and only if $\,\lend{\psi(u)}\sqsubset \lend{\psi(v)}$ and $\rend{\psi(u)}\sqsubset \rend{\psi(v)}$.
\end{enumeratei}
\skc{These assumptions imply} $\,L\cong \lat{\pair F{\chullo F}}$. 
\end{lemma}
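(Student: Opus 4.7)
The plan is to apply Proposition~\ref{jirmaprop} to a bijection between join-irreducibles derived from $\psi$. As preparation, I would verify that both lattices satisfy \cprop 2. Since $\pair F{\chullo F}$ is a convex geometry by Proposition~\ref{prop1}, the lattice $\lat{\pair F{\chullo F}}$ is finite and meet-distributive, hence lower semimodular by Claim~\ref{mdstrthnlsm}. Lemma~\ref{mirlemma} exhibits every nonempty closed set as the meet of two elements taken from two fixed chains, so the meet-irreducibles of $\lat{\pair F{\chullo F}}$ lie in the union of those two chains, witnessing that $\lat{\pair F{\chullo F}}$ is dually slim. By Proposition~\ref{cpropos} both $L$ (dually slim by hypothesis) and $\lat{\pair F{\chullo F}}$ satisfy \cprop 2. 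Lemma~\ref{mzrrDcgbn} identifies $\Jir{\lat{\pair F{\chullo F}}}=\set{\fromto FAA : A\in F}$, so composing $\psi$ with $A\mapsto\fromto FAA$ yields a bijection $\phi\colon\Jir L\to\Jir{\lat{\pair F{\chullo F}}}$ given by $\phi(u)=\fromto F{\psi(u)}{\psi(u)}$.

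Next I would unfold the hypothesis of Proposition~\ref{jirmaprop} geometrically. Using that $\fromto F{\psi(u)}{\psi(u)}$ is the smallest closed set containing $\psi(u)$ together with the routine identity $\chulla F{X\cup Y}=\chulla F{\chulla F X\cup\chulla F Y}$, a short calculation gives
\[
\phi(a)\leq\phi(b_1)\vee\phi(b_2)\,\iff\,\psi(a)\subseteq\rhullo 2\bigl(\psi(b_1)\cup\psi(b_2)\bigr).
\]
So the entire task reduces to proving, for all $a,b_1,b_2\in\Jir L$, that $a\leq b_1\vee b_2\iff\psi(a)\subseteq\rhullo 2(\psi(b_1)\cup\psi(b_2))$.

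I would verify this by cases on the poset relations among $a,b_1,b_2$. If $a\leq b_i$ for some $i$, condition (i) gives both sides. If $b_1,b_2<a$, then $b_1\vee b_2\leq\lowstar a<a$ (as $a\in\Jir L$), and condition (i) places $\rhullo 2(\psi(b_1)\cup\psi(b_2))$ strictly inside the closed disk bounded by $\psi(a)$, so the boundary circle $\psi(a)$ is not contained in it. The mixed case $b_i<a$ with $a\parallel b_j$ is ruled out on the lattice side by Proposition~\ref{prszdobzF} and on the geometric side by the analogous observation, using (i) and (ii), that $\psi(a)$ protrudes in some direction beyond $\psi(b_j)$ where $\psi(b_i)$ (being contained in the disk bounded by $\psi(a)$) cannot compensate. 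The crucial case is the antichain $\set{a,b_1,b_2}$: Proposition~\ref{propokoztes} tells us that $a\leq b_1\vee b_2$ holds if{f} $a$ is horizontally between $b_1$ and $b_2$, say with $b_1$ on the left and $b_2$ on the right. Condition (ii) translates this into $\lend{\psi(b_1)}\sqsubset\lend{\psi(a)}\sqsubset\lend{\psi(b_2)}$ and $\rend{\psi(b_1)}\sqsubset\rend{\psi(a)}\sqsubset\rend{\psi(b_2)}$, and the concavity axiom \eqref{efbetween} for $F$ then delivers $\psi(a)\subseteq\rhullo 2(\psi(b_1)\cup\psi(b_2))$. In the complementary antichain configuration, where both $b_1,b_2$ lie on the same side of $a$, condition (ii) and a short check with the lexicographic definition of $\sqsubset$ upgrade the end-orderings to strict scalar inequalities $\rmpt{\psi(b_i)}<\rmpt{\psi(a)}$ for $i=1,2$ (or symmetrically for $\lmpt$), so the rightmost (resp.\ leftmost) point of $\psi(a)$ sits strictly outside $\rhullo 2(\psi(b_1)\cup\psi(b_2))$.

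With the hypothesis of Proposition~\ref{jirmaprop} verified, $\phi$ extends to the desired isomorphism $L\cong\lat{\pair F{\chullo F}}$. I expect the main obstacle to be the antichain case: the forward direction is precisely where the concavity axiom \eqref{efbetween} is consumed, while the converse requires the delicate observation that the joint two-coordinate conditions in (ii) cannot be realised with equality on both $\lmpt$ and $\rmpt$ simultaneously, ruling out the potential tie cases permitted because $F$ is not assumed separated.
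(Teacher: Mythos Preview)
Your approach is essentially the paper's: define $\phi(u)=\fromto F{\psi(u)}{\psi(u)}$, verify condition~\eqref{cp2djGt} by the same case analysis via Propositions~\ref{propokoztes} and~\ref{prszdobzF}, then invoke Propositions~\ref{cpropos} and~\ref{jirmaprop}. The one notable difference is how the ``negative'' cases (where $\phi(a)\not\leq\phi(b_1)\vee\phi(b_2)$ must be shown) are handled: you argue geometrically that some point of $\psi(a)$ lies outside $\rhullo 2(\psi(b_1)\cup\psi(b_2))$, whereas the paper exhibits the closed set $\fromto F{\psi(b_1)}{\psi(b_2)}$ from Lemma~\ref{fromtolemma}\eqref{fromtolemmaa}, which contains $\psi(b_1),\psi(b_2)$ but not $\psi(a)$ and is defined purely in terms of $\sqsubset$. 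Your route is correct, but be aware that upgrading $\rend{\psi(b_i)}\sqsubset\rend{\psi(a)}$ to the strict scalar inequality $\rmpt{\psi(b_i)}<\rmpt{\psi(a)}$ does not follow from the lexicographic definition alone; you must also invoke condition~\eqref{lMlStma}, since $\rmpt{\psi(b_i)}=\rmpt{\psi(a)}$ together with $\rad{\psi(b_i)}<\rad{\psi(a)}$ would force $\psi(b_i)\subseteq\rhullo 2(\psi(a))$ and hence $b_i\leq a$, contradicting the antichain hypothesis. The paper's horizontal-interval argument sidesteps this tangency issue entirely. Your explicit verification that $\lat{\pair F{\chullo F}}$ is dually slim, via Lemma~\ref{mirlemma}, is a detail the paper leaves implicit when citing Proposition~\ref{cpropos}.
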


\begin{proof} Lemma~\ref{mzrrDcgbn} allows us to define a \skc{bijective} map $\phi\colon \Jir L\to \Jir{\!\bigl(\lat{\pair F{\chullo F}}\bigr)}$ by $\phi(u)=\fromto F{\psi(u)}{\psi(u)}$.
Clearly, 
\[\psi(u)\subseteq \rhullo 2 (\psi(v))\,\, \iff \,\,\fromto F{\psi(u)}{\psi(u)} \subseteq \fromto F{\psi(v)}{\psi(v)}\text.
\]
Therefore, by Assumption \eqref{lMlStma}, $\phi$ is an order-isomorphism.
Since we want to apply Propositions~\refegy{\ref{cpropos} and} \ref{jirmaprop}, we are going to show that $\phi$ satisfies Condition \eqref{cp2djGt} \refegy{with $\tuple{2,a,b,c}$ in place of $\tuple{n,a,b_1,\dots,b_n}$}.

If $a\leq b$ or $a\leq c$, then $\phi(a)\leq \phi(b)$ or $\phi(a)\leq \phi(c)$ since $\phi$ is an order-isomorphism\skc{. The case $b\nonparallel c$ is even more evident.} Thus, if $\set{a,b,c}$ is not an antichain,  we have
\begin{equation}\label{aleqbveec}
a\leq b\vee c \iff \phi(a)\leq \phi(b)\vee \phi(c)\text. 
\end{equation}

Next, assume that $\set{a,b,c}$ is an antichain. \skc So is $\set{\phi(a),\phi(b),\phi(c)}$ since $\phi$ is an order-isomorphism.  There are two cases: either $a$ is \skc{horizontally} between $b$ and $c$, or not. 

In the first case, we can assume that $b$ 
is on the left of $a$ and $a$ is on the left of $c$. \skc Proposition~\ref{propokoztes} gives $a\leq b\vee c$, and 
\eqref{lMlStmb} yields 
\begin{equation}\label{dzVbXy}
\begin{aligned}
&\lend{\psi(b)}\sqsubset \lend{\psi(a)}<\lend{\psi(c)}\,\,\text{ and}\cr
&\rend{\psi(b)}<\rend{\psi(a)}\sqsubset \rend{\psi(c)}
\end{aligned}
\end{equation}
Since $F$ is concave, \eqref{dzVbXy} implies $\psi(a)\subseteq \rhullo2 \bigl(  {\psi(b)}\cup {\psi(c)} \bigr)$, which gives $\phi(a)\leq \phi(b)\vee \phi(c)$. Hence, \eqref{aleqbveec} holds in this case.

In the second case, where $a$ is not \skc{horizontally} between $b$ and $c$, we can assume that $a$ is on the left of $b$ and $b$ is on the left of $c$. \skc By 
Proposition~\ref{propokoztes}, we have $a\not\leq b\vee c$, and \eqref{lMlStmb} gives
\begin{equation}\label{dznTsXy}
\begin{aligned}
&\lend{\psi(a)}\sqsubset \lend{\psi(b)}\sqsubset \lend{\psi(c)}\,\,\text{ and}\cr
&\rend{\psi(a)}\sqsubset \rend{\psi(b)}\sqsubset \rend{\psi(c)}
\end{aligned}
\end{equation}
Let $X=\fromto F{\psi(b\skc )}{\psi(c)}$. \skc{We have}  
$\set{\psi(b),\psi(c)}\subseteq X\in \lat{\pair F{\chullo F}}$ by Lemma \ref{fromtolemma}, and $\psi(a)\notin X$ by the definition of horizontal intervals. Since $\phi(b)\vee\phi(c)\subseteq X$, we conclude $\phi(a)\not\leq \phi(b)\vee \phi(c)$, and \eqref{aleqbveec} holds in this case.

Next, assume $a>b$ and $a>c$, and let $\lowstar a$ stand for the unique lower cover of $a$. \skc{Now} $\lowstar a\geq b\vee c$, and we have $a\not\leq b\vee c$. Since $\psi$ is an order-isomorphism, $\phi(\lowstar a)\geq\phi(b)\vee\phi(c)$.  This gives $\phi(a)\not\leq \phi(b)\vee\phi(c)$, and \eqref{aleqbveec} is fulfilled again.

Finally, up to $b$-$c$ symmetry, we are left with the case where $b\parallel a$ and $c<a$. We can assume $b\parallel c$ since otherwise both sides of \eqref{aleqbveec} would obviously be false and \eqref{aleqbveec} would hold.  Take a maximal chain $C$ including $\set{c,a}$; it does not contain $b$. Let, say, $b$ be on the left of $C$. Now, by Lemma~\ref{leftrightlemma}, $b$ is on the left of $c$ and also on the left of $a$.  
Since $\psi(c)\mathrel{\skc{\subseteq}} \rhullo2(\psi(a))$ \skc{by Assumption \eqref{lMlStma}}, 
we conclude  $\rend{\psi(c)}\mathrel{\skc{\sqsubseteq}} \rend{\psi(a)}$. \skc{Thus $\rend{\psi(c)}\sqsubset \rend{\psi(a)}$ since our circles are determined by their left ends.}
%
This and \eqref{lMlStmb} yield
\begin{equation}\label{dzWTZX}
\begin{aligned}
&\lend{\psi(b)}\sqsubset \lend{\psi(c)}\,\,\text{ and }\,\,\cr 
&\rend{\psi(b)}\sqsubset \rend{\psi(c)}\sqsubset \rend{\psi(a)}\text.
\end{aligned}
\end{equation}
As previously, this gives $\set{\psi(b),\psi(c)}\subseteq \fromto F{\psi(b)}{\psi(c)}\in \lat{\pair F{\chullo F}}$ and 
$\psi(a)\notin\fromto F{\psi(b)}{\psi(c)}$, which implies $\phi(a)\not\leq\phi(b)\vee\phi(c)$. Since $a\not\leq b\vee c$ by Proposition~\ref{prszdobzF}, \eqref{aleqbveec} is satisfied again.

Since \eqref{aleqbveec} holds in all cases, $\phi$ satisfies \eqref{cp2djGt}. Thus Propositions~\skc{\ref{cpropos} and} \ref{jirmaprop} appl\skc{y}.
\end{proof}

\begin{proof}[Proof of Theorem~\ref{thmmain}] 
Let  $F$ be a finite, concave set of  collinear circles in the plane. Proposition~\ref{prop1} yields that $\pair F{\chullo F}$
is a convex geometry. Hence, by Lemma~\ref{geomversuslat},  $\lat{\pair F{\chullo F}}$ is a finite meet-distributive lattice, and it is lower semimodular by Claim \ref{mdstrthnlsm}.
We obtain from Lemma~\ref{mirlemma} that this lattice is dually slim. Therefore, Lemma~\ref{slimgeomlemma} implies that $\pair F{\chullo F}$
is a convex geometry of convex dimension at most 2. This proves part (A).

In view of Lemmas~\ref{geomversuslat} and \ref{slimgeomlemma}, Part (B) is equivalent to the following statement:
\begin{enumeratei}
\item[(C)] If $L$ is a finite, dually slim, lower semimodular lattice, then there exists a finite,  separated, concave set $F$ of  collinear circles in the plane such that $L$ is isomorphic to $\lat{\pair F{\chullo F}}$.
\end{enumeratei}
We prove (C) by inductio\skc n.
By Lemma~\ref{lMlStm}, it suffices to construct a pair $\pair F\psi$ such that $F$ is a finite,  separated, concave se\skc{t of}  collinear circles in the plane and $\psi\colon \Jir L\to F$ is a bijective map satisfying Conditions \eqref{lMlStma} and \eqref{lMlStmb} of Lemma~\ref{lMlStm}. 
Since we are going to construct a separated $F$,  \eqref{obvruuules} allows us to satisfy these two conditions with leftmost and rightmost endpoints rather \refket{than} left and right ends. 

First, assume that  $L$ is a chain or, equivalently,  $\Jir L$ is a chain.  We can let $F$ and $\psi$ be a set of concentric circles and  the unique map satisfying Condition \eqref{lMlStma} of Lemma~\ref{lMlStm}, respectively; clearly, $\pair F\psi$ is an appropriate pair.  More generally, not assuming that $L$ is a chain, we can prove the existence of an appropriate pair $\pair F\psi$ by induction on the size $|L|$ of $L$. Since the case of chains has been settled, the induction starts at size 4.

\begin{figure}
\centerline
{\includegraphics[scale=1.0]{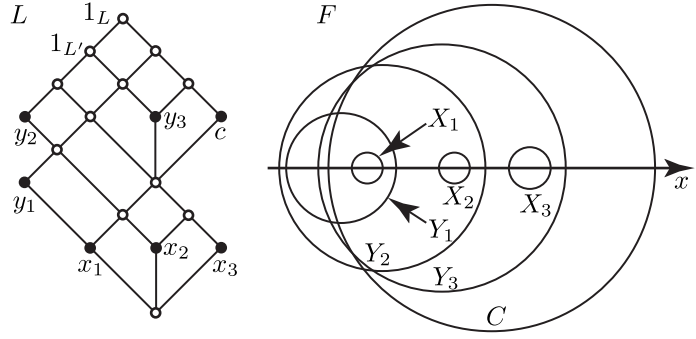}}
\caption{A dually slim lower semimodular lattice $L$ and the corresponding separated concave set $F$ of collinear circles\label{fig3}}
\end{figure}

Assume that $|L|\geq 4$ and   for each finite, dually slim, lower semimodular lattice of smaller size, there exists and appropriate pair. Take a maximal doubly irreducible element $c\in L$ \skc{(like $b$ in} Proposition~\ref{dirronbtop}\skc). By left-right symmetry, we can assume that $c$ is on the right boundary chain of $L$;
see Figure~\ref{fig3}, on the left, for illustration. (The figure serves only as an  illustration, so the reader need not check the properties of $L$. However, we note that $L$ is obviously a dually slim, lower semimodular lattice by the dual of \init{G.~}Cz\'edli and \init{E.\,T.\ }Schmidt \cite[Theorem 12]{czgschvisual}.)
Since $\filter c$ is a prime filter \skc{by Proposition~\ref{dirronbtop}}, $L'=L\setminus \filter c$ is \refket{an} ideal of $L$. As an interval of $L$, $L'$ is lower semimodular. It follows from Lemma~\ref{slimplanar}(A) that $L'$ is dually slim. Hence, by the induction hypothesis, there exists an appropriate pair $\pair{F'}{\psi'}$ for $L'$. We want to define $\pair F\psi$ such that $F'\subset F$ and $\psi$ be an extension of $\psi'$. By Proposition~\ref{dirronbtop}\eqref{dirronbtopb}, $\Jir L=\Jir{L'}\cup\set c$. Therefore, our purpose is to find and appropriate circle $C$ and to let $\psi$ be the map from $\Jir L$ to $F=F'\cup\set C$ defined by $\psi(c)=C$ and $\psi(x)=\psi'(x)$ for $x\in \Jir{L'}$.
In the figure, $\Jir{L'}=\set{x_1,x_2,x_3,y_1,y_2,y_3}$ and $\Jir L= \Jir{L'}\cup\set c$ is the set of black-filled elements. On the right of the figure, 
we write $C$, $X_i$, and $Y_i$ instead of $\psi(c)$, $\psi'(x_i)$, and $\psi'(y_i)$, respectively;  $F$ is the collection of all circles, and $F'=F\setminus\set C$. 

\begin{figure}
\centerline
{\includegraphics[scale=1.0]{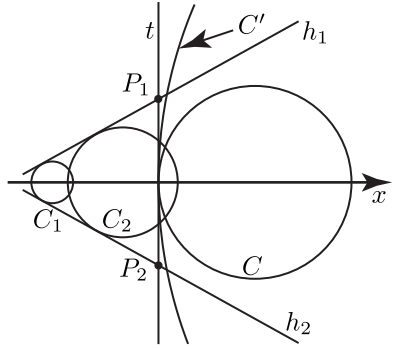}}
\caption{Modifying $C$ to make $F$ concave \label{fig4}}
\end{figure}

Since $c$ is on the right boundary chain, Lemma~\ref{leftrightlemma}(A)  yields that each $x\in \Jir L$ is either strictly on the left of $c$ or comparable to $c$. This together with  
Proposition~\ref{dirronbtop}\eqref{dirronbtopb} imply that $\Jir{L'}$ is the disjoint union of the following two sets:
\[\Jbelow=\Jir{L'}\cap\ideal c   \,\,\text{ and }\,\,\Jleft=\set{y\in\Jir{L'}: y\text{ is on the left of }c} \text.
\]
Note that one of these two sets can be empty but their union,  $\Jir{L'}$, is nonempty. In the figure, $\Jbelow=\set{x_1,x_2,x_3}$ and $\Jleft=\set{y_1,y_2,y_3}$.
Condition \eqref{lMlStma} of Lemma~\ref{lMlStm} will hold if{f}
\begin{equation}\label{lmpchoa}
\text{$\lmpt C < \lmpt{\psi'(x)}$ 
and $\rmpt{\psi'(x)} < \rmpt C$ 
for all $x\in\Jbelow$.} 
\end{equation}
Similarly, Condition \eqref{lMlStmb} of Lemma~\ref{lMlStm} will hold if{f}
\begin{equation}\label{lmpchob}
\text{$\lmpt{\psi'(y)} < \lmpt C$ 
and $\rmpt{\psi'(y)} < \rmpt C$ 
for all $y\in\Jleft$.} 
\end{equation}
The only stipulation that \eqref{lmpchoa} and \eqref{lmpchob} tailor on  $\rmpt C$ is  
\begin{equation}\label{rmptflfrg}
\rmpt C>\max\set{\rmpt{\psi'(z)}:z\in\Jir{L'} };
\end{equation}
this can be satisfied easily. 
Therefore, to see that we can choose $\lmpt C$ such that \eqref{lmpchoa} and \eqref{lmpchob} hold, it suffices  to show that 
\begin{equation}\label{szBjHks}
\text{for all }x\in\Jbelow\text{ and }
y\in \Jleft\text{, }\lmpt{\psi'(y)}<\lmpt{\psi'(x)}\text.
\end{equation}
For $x\in \Jbelow$ and $y\in \Jleft$,  there are two cases. 
First, assume  $x\nonparallel y$. Clearly, $y\not\leq x$, thus $x<y$. Since Condition \eqref{lMlStma} of Lemma~\ref{lMlStm} holds for $\pair{F'}{\psi'}$, we have $\lend {\psi(y)}\sqsubset \lend {\psi(x)}$ and $\rend {\psi(x)}\sqsubset \rend {\psi(y)}$.  This yields $\lmpt{\psi'(y)}<\lmpt{\psi'(x)}$ since $F'$ is separated.  
Second, assume $x\parallel y$. 
Let $E$ be a maximal chain in $L$ that extends $\set{x,c}$. Since $y$ is on the left of $c$, $y$ is on the left of $E$ by Lemma~\ref{leftrightlemma}(A). Hence, again by Lemma~\ref{leftrightlemma}(A), $y$ is on the left of $x$. 
Therefore, using 
Condition \eqref{lMlStmb} of Lemma~\ref{lMlStm} for the appropriate pair
$\pair{F'}{\psi'}$, we conclude $\lend{\psi'(y)}\sqsubset  \lend{\psi'(x)}$. This implies $\lmpt{\psi'(y)}<\lmpt{\psi'(x)}$ since $F$ is separated. Thus \eqref{szBjHks} holds, and so do \eqref{lmpchoa} and \eqref{lmpchob}.

Since \eqref{lmpchoa} and \eqref{lmpchob} are strict inequalities, we can choose both $\lmpt C$ and $\rmpt C$ infinitely many ways. Therefore, we can choose $C$ so that $F$ be separated.

Finally, we have to show that $F$ is concave. Since $F'$ is concave and separated, the only  case we have to consider is 
\begin{equation}\label{ksjsjS}
\text{$\lmpt{C_1} < \lmpt{C_2}$ and, automatically, $\rmpt{C_2} < \rmpt{C}$,}
\end{equation}
where $C_1,C_2\in F'$; 
\skc{we}  have to show 
\begin{equation}\label{sjGlSGW}
C_2\subseteq \rhullo2( C_1\cup C)\text. 
\end{equation}
Suppose that after choosing $C$, \eqref{sjGlSGW} fails for some $C_1,C_2\in F'$, see Figure~\ref{fig4}. \skc{The circle} $C$ is in the interior of the region between the two common tangent lines $h_1$ and $h_2$ of $C_1$ and $C_2$. Let $t$ be the tangent line of $C$ through $\pair{\lmpt C}{0}$, and let $P_i$ be the intersection point of $h_i$ and $t$ for $i\in\set{1,2}$. If  $\rmpt C$ tends to infinity while $\lmpt C$ is unchanged, then the arc of $C$  between $h_1$ and $h_2$ with middle point $\pair{\lmpt C}0$  approaches the line segment $P_1P_2$. Therefore, replacing $C$ by $C'$ such that $\lmpt {C'}=\lmpt C$ and $\rmpt{C'}$ is sufficiently large, we have $C_2\subseteq \rhullo2 ( C_1\cup C' )$ and \eqref{rmptflfrg}. We can treat all pairs $\pair{C_1}{C_2}\in F'\times F'$ with $\lmpt{C_1}<\lmpt{C_2}$,   each after each, because $\rmpt C$ can always be enlarged. This proves that $F$ is concave for some $C$.
\end{proof}

\section{Odds and ends}\label{oddsendch}
\begin{remark}
It is not hard to see that Theorem~\ref{thmmain} remains valid if we consider closed discs or open discs instead of circles, and modify the definitions accordingly. The advantage of  circles is that they are easier to visualize and label in figures. 
Open discs  are particularly less pleasant than circles since they cannot be singletons.
Note that we cannot use semicircles or half discs since, by the following example, the corresponding structure is not a convex geometry in general.
\end{remark}

\begin{figure}
\centerline
{\includegraphics[scale=1.0]{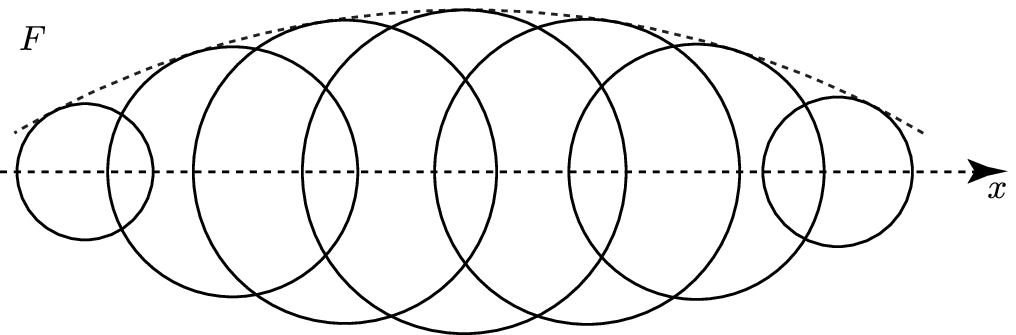}}
\caption{Here  $\lat{\pair F{\chullo F}}$ is the 128-element boolean lattice\label{fig5}}
\end{figure}

\begin{example} Let 
\[H_1=\set{\pair xy: x^2+y^2=4\text{ and }x\leq 0},\text{ } H_2=\set{\pair xy: x^2+y^2=1\text{ and }x\geq 0}\text.
\]
Rotating $H_2$ around $\pair 00$ by angle $\pi/100$, we obtain a half circle $H_3$. Since $H_{5-i}$ \skc{belongs to} $\rhullo2(H_1\cup H_i)$ for $i\in\set{2,3}$, the anti-exchange 
property fails, and we do not obtain a convex geometry from $\set{H_1,H_2,H_3}$.
\end{example}

\begin{figure}
\centerline
{\includegraphics[scale=1.0]{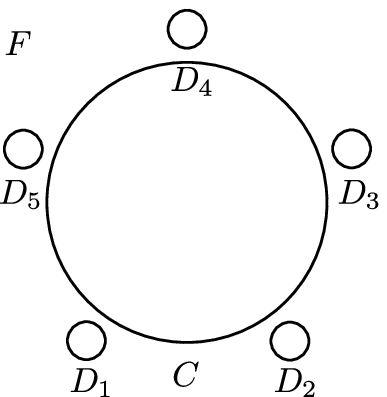}}
\caption{An $\pair F{\chullo F}$ that does not satisfy \cprop 4  \label{fig6}}
\end{figure}

The following example show\skc s that the convex dimension of  $\pair F{\chullo F}$ can be arbitrarily large even if the circles in $F$ are collinear. 

\begin{example}\label{krkrntNk} Let $F$ be an $n$-element set of collinear circles. Assume that there is an additional circle $K$ such that every circle  $C\in F$ is internally tangent to $K$; see Figure~\ref{fig5} for $n=7$, where the dotted curve is an arc of $K$.
\skc{Clearly,} $\lat{\pair F{\chullo F}}=\pair{\powset F}{\subseteq}$ is the $2^n$-element boolean lattice, and the convex dimension of $\lat{\pair F{\chullo F}}$ is $n$.
\end{example}

\begin{proof} The equality $\lat{\pair F{\chullo F}}=\pair{\powset F}{\subseteq}$ is obvious. Since 
\[\Mir\pair{\powset F}{\subseteq} = \set{F\setminus\set C:C\in F}\] 
is an $n$-element antichain, the convex dimension is $n$.
\end{proof} 

While $\pair E{\phullo 2E}$ satisfies  Carath\'eodory's condition \cprop 3 for every finite set $E$ of points of the \refegy{plane}, the following example shows that 
circles are essentially different from points. 

\begin{example}\label{cvmdfPnTs} For each natural number $n$, there exists an $(n+2)$-element set $F$ of circles in the \refegy{plane} such that $\pair F{\chullo F}$ does not satisfy \cprop n. For example, we can take the inscribed circle of a regular $(n+1)$-gon and $n+1$ additional little circles whose centers are the vertices of the $(n+1)$-gon; see Figure~\ref{fig6} for $n=4$.
\end{example}

This example has no collinear counterpart since we have the following proposition.

\begin{proposition}\label{prsoZHrW}
 If $F$ is a finite set of collinear circles, then ${\pair F{\chullo F}}$ satisfies  Carath\'eodory's condition  \cprop 2. 
\end{proposition}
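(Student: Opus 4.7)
The plan is to translate the lattice-theoretic claim into a statement about convex hulls of collinear disks, and then to reduce that statement, via support functions, to an elementary one-dimensional covering fact.

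First, I would use Lemma~\ref{mzrrDcgbn} to identify the join-irreducibles of $K := \lat{\pair F{\chullo F}}$ with the circles of $F$ via $A \mapsto \fromto FAA$, noting that $\fromto FAA$ is precisely the set of circles of $F$ lying inside the closed disk $\rhullo 2(A)$. Writing $a = \fromto F{A_0}{A_0}$ and $b_i = \fromto F{B_i}{B_i}$, a short calculation shows $a \le \bigvee_i b_i$ in $K$ if and only if $A_0 \subseteq \rhullo 2(B_1 \cup \dots \cup B_k)$: the ``$\Leftarrow$'' direction follows by convexity, and ``$\Rightarrow$'' follows because $A_0$ itself lies in $a$. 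Analogously, $a \le b_i \vee b_j$ corresponds to $A_0 \subseteq \rhullo 2(B_i \cup B_j)$. So the task reduces to the geometric claim: \emph{if $A_0, B_1, \dots, B_k$ are circles centered on the $x$-axis and $A_0 \subseteq \rhullo 2(\bigcup_i B_i)$, then $A_0 \subseteq \rhullo 2(B_i \cup B_j)$ for some indices $i, j$.}

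Next, I would encode this inclusion via support functions. The support function of the disk centered at $(u, 0)$ of radius $r$ in the direction $(\cos\theta, \sin\theta)$ equals $u\cos\theta + r$, so the hypothesis becomes
\[
u_0 \cos\theta + r_0 \le \max_{1 \le i \le k}(u_i \cos\theta + r_i) \quad\text{for every } \theta \in [0, 2\pi),
\]
and the target inclusion is the analogous inequality with only two indices on the right. Setting $t := \cos\theta \in [-1, 1]$ and $M_i(t) := (u_i - u_0) t + (r_i - r_0)$, the hypothesis reads $[-1, 1] \subseteq \bigcup_i R_i$ where $R_i := \{t \in \mathbb R : M_i(t) \ge 0\}$, and the goal is $[-1, 1] \subseteq R_i \cup R_j$ for some $i, j$.

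Finally, I would observe that each $R_i$ is either all of $\mathbb R$ (precisely when $u_i = u_0$ and $r_i \ge r_0$, i.e.\ $A_0 \subseteq \rhullo 2(B_i)$, so $i = j$ already works), empty, or a proper closed halfline. Dropping the empty ones and the trivial case, I would split the remaining halflines into ``left'' ones $(-\infty, b_i]$ (arising when $u_i < u_0$) and ``right'' ones $[a_i, \infty)$ (when $u_i > u_0$), and set $b^* := \max b_i$, $a^* := \min a_i$ whenever these families are nonempty. If only one family is present, the union is itself a single halfline that must alone contain $[-1, 1]$. If both families appear and no single halfline covers $[-1, 1]$, I claim $b^* \ge a^*$; indeed, if $b^* < a^*$, then either the gap $(b^*, a^*)$ meets $[-1, 1]$, producing a point in no $R_i$ and contradicting the covering hypothesis, or the gap is disjoint from $[-1, 1]$, forcing $a^* \le -1$ or $b^* \ge 1$ and hence a single halfline covering---also a contradiction. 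Thus $(-\infty, b^*] \cup [a^*, \infty) = \mathbb R \supseteq [-1, 1]$, and the two corresponding indices witness $A_0 \subseteq \rhullo 2(B_i \cup B_j)$. The main obstacle I anticipate is arriving at the support-function reduction; once the problem is cast as covering $[-1, 1]$ by halflines, the rest is an elementary exercise.
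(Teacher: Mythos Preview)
Your proof is correct and takes a genuinely different route from the paper's. After the same initial reduction via Lemma~\ref{mzrrDcgbn} to the purely geometric statement about circles, the paper argues pictorially: it enlarges $C$ concentrically until it becomes internally tangent to the boundary $G$ of $\rhullo 2(D_1\cup\dots\cup D_k)$ at some point $T$; if $T$ lies on a circular arc of $G$ coming from some $D_m$, then the enlarged circle equals $D_m$ and $C\subseteq\rhullo 2(D_m)$, while if $T$ lies on a straight segment of $G$ (a common tangent of two circles $D_i,D_j$), a perpendicularity argument shows the enlarged circle is trapped in $\rhullo 2(D_i\cup D_j)$. Your approach instead linearizes the problem via support functions, exploiting collinearity to collapse the direction variable to $t=\cos\theta\in[-1,1]$, and then finishes with an elementary halfline-covering argument. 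The paper's proof is more visual and self-contained (no support-function machinery), whereas yours makes the role of collinearity completely transparent---it is precisely what reduces the two-dimensional support inequality to a one-parameter problem---and the resulting combinatorics is entirely routine. Both arguments identify the same dichotomy (one circle suffices versus a left/right pair is needed), just in different languages.
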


\begin{proof} 
In view of Lemma~\ref{mzrrDcgbn}, we have to show the following: if $C,D_1,\dots,D_k\in F$ such that $C\subseteq \rhullo2(\refket{D_1\cup\dots \cup D_k } )$, then there exist $i,j\in\set{1,\dots,k}$ such that $C\subseteq \rhullo2(D_i\cup D_j)$. Let $G$ be the boundary of $\rhullo2(D_1\cup\dots D_k)$\skc; see the thick closed curve in Figure~\ref{fig7}, where $k=5$ and $F$ contains  the solid circles and possibly some other circles not indicated. (The dotted circle need not belong to $F$.) Clearly, $G$ can be divided into circular arcs and  straight line segments of common tangent lines of some circles belonging to $\set{D_1,\dots,D_k}$; these parts are separated by black-filled points in the figure. Keeping its center fixed, we enlarge $C$ to $C'$ such that $C'\subseteq \rhullo2(D_1\cup\dots D_k)$ and $C'$ is internally tangent to $G$ at a point $T\in G\cap C'$. There are two cases.

\begin{figure}
\centerline
{\includegraphics[scale=1.0]{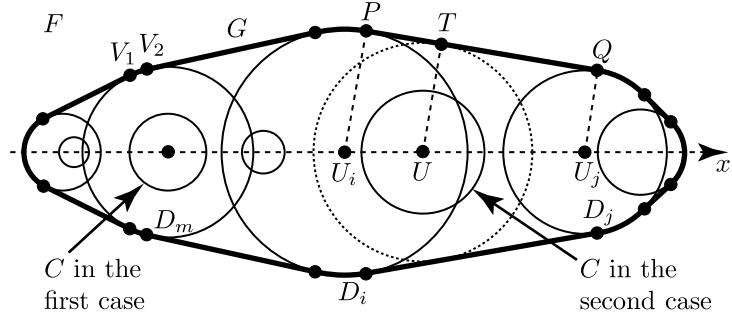}}
\caption{Illustrating the proof of Proposition~\ref{prsoZHrW}  \label{fig7}}
\end{figure}

In the first case, we assume that $T$ belongs to a  circular arc of $G$. In the figure, $T$ in this case is not indicated; it can be any point of the closed circular arc $V_1V_2$. 
This arc is also an arc of some member, $D_m$, of $F$. Clearly, $C'=D_m$. Thus  $D_m$ and $C$ are concentric circles, and $C\subseteq \rhullo2(D_m)$. Hence, we can let $i=m$ and $j=m$. Observe that  circular arcs of length 0 cause no problem since then the radius of $D_m$ is zero, the center of $D_m$ belongs to $G$, and $C=D_m$.

In the second case, we assume that $T$ belongs to a  straight line segment $PQ$ of $G$. Now $C'$ is the dotted circle in the figure, and its center is $U$. We can assume that $T\notin\set{P,Q}$ since otherwise the previous case applies. Clearly, $P$ is a point of a unique $D_i$ with center $U_i$,  and $Q$ is on a unique $D_j$ with center $U_j$.  Since the radii $PU_i$, $TU$ and $QU_j$ are all perpendicular to the  common tangent line $PQ$, it follows that $U$ is between $U_i$ and $U_j$, and $C'\subseteq\rhullo2(D_i\cup D_j)$. Therefore, $C\subseteq\rhullo2(D_i\cup D_j)$.
\end{proof}

For a class $\mathcal U$ of structures, let $\izom{\mathcal U}$ denote the class of structures that are isomorphic to some members of $\mathcal U$. We consider the following classes of finite convex geometries; by circles we mean circles in the \refegy{plane}.
{\allowdisplaybreaks{
\begin{align*}
\kccc&=\izom{\set{{\pair F{\chullo F}}: F\text{ is  a finite, concave set of collinear circles}    }},\cr
\kccol&=\izom{\set{{\pair F{\chullo F}}: F\text{ is  a finite set  of collinear circles}    }},\cr
\kcplan&=\izom{\set{{\pair F{\chullo F}}: F\text{ is  a finite set  of  circles}    }},\cr
\prel n&=\izom{ \set{{\pair E{\phullo nE}}: E\text{ is a finite subset of } \mathbb R^{n }} },\cr
\kcall&=\text{the class of all finite convex geometries.}
\end{align*}
}}%

Results by \init{K.~}Adaricheva \cite{adaricheva} and \init{G.\,M.}Bergman~\cite{bergman} show that 
\begin{equation}\label{sPNGrR}
\prel 2 \subset \prel 3 \subset \prel 4\subset\dots \text.
\end{equation}
We obtain from Examples~\ref{krkrntNk} and \ref{cvmdfPnTs}, Theorem~\ref{thmmain}(A), and Proposition~\ref{prsoZHrW} that 
\begin{equation}\label{sdhGrR}
\kccc \subset \kccol\subset \kcplan\,\text{ and, clearly, }\prel 2\subseteq  \kcplan\text.
\end{equation}
\skc{If} $E\subseteq \mathbb R^2$ consist\skc{s} of three non-collinear points and their barycenter\skc{, then the convex geometry} $\pair E{\phullo 2E}$ does not satisfy \cprop 2, and we conclude from Proposition~\ref{prsoZHrW} that
\begin{equation}\label{dWh4PrR}
 \prel 2\not\subseteq \kccol\text.
\end{equation}
In the lattices associated with members of $\prel n$, all join-irreducible element\skc s are atoms. This implies that 
\begin{equation}\label{sdhhLvBr}
\text{for all } n\geq 2,\text{ }\,\,\kccc \not\subseteq \prel n\text.
\end{equation}

\subsection{Some open problems}
In spite of  \eqref{sPNGrR}, \eqref{sdhGrR}, \eqref{dWh4PrR},  and \eqref{sdhhLvBr}, we do not have a satisfactor\skc y description of the partially ordered set  
\begin{equation}\label{dkKFgM}
\pair{\set{ \kccc, \kccol, \kcplan, \kcall, \prel 1,\prel 2, \prel 3,\dots }}{\subseteq}\text.
\end{equation}
In particular, we do not know whether  \[\prel 3\overset{?}\subseteq \kcplan\quad \text{or} \quad  \kcplan \overset{?}= \kcall\quad \text{holds.}
\]
If we augment the set \eqref{dkKFgM} with convex geometries obtained from $n$-dimensional spheres or, say, coplanar three-dimensional spheres,  then the problem becomes even more difficult.
Finally, while Theorem~\ref{thmmain} describes $\kccc$ in an abstract way, we have no similar descriptions for  $\kccol$ and $\kcplan$. 
 
\subsection{\refegy{Representation by $1$-dimensional circles (added on May 19, 2013)}}
\label{kirasection}
\refegy{Let $F$ be a finite subset of $\set{\pair ab: a,b\in\mathbb R\text{ and }a\leq b}$. Its elements  will be called \emph{$1$-dimensional circles}. If $C=\pair ab\in F$, then $a=\lmpt C$ and $b=\rmpt C$ are the left and right endpoints of $C$, respectively.  Formulas \eqref{psxdFa} and \eqref{psxdFb} still make sense, and we clearly  obtain that $\pair F{\chullo F}$ is a convex geometry. When reading the first version of the present paper,  \init{K.\ }Adaricheva \cite{kirajan6} observed that Theorem~\ref{thmmain} has the following corollary.}

\begin{corollary}[\init{K.\ }\refegy{Adaricheva \cite{kirajan6}}]\label{kiracorol} \refegy{Up to isomorphism, finite convex geometries of convex dimension at most 2 are characterized as the convex geometries $\pair F{\chullo F}$, where $F\subseteq\set{\pair ab: a,b\in\mathbb R\text{ and }a\leq b}$ and $F$ is finite.}
\end{corollary}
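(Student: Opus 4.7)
The plan is to derive Corollary~\ref{kiracorol} directly from Theorem~\ref{thmmain} via the natural ``projection'' that sends a $2$-dimensional circle with centre on the $x$-axis to the closed interval $[\lmpt C,\rmpt C]$, viewed as a $1$-dimensional circle. The two inclusions need slightly different ingredients.

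For the direction ``every $\pair F{\chullo F}$ with $F\subseteq \set{\pair ab: a\le b}$ finite is a convex geometry of convex dimension at most $2$'', I would check that the development of Section~\ref{proofsection} transfers essentially verbatim to $1$-dimensional circles. In fact things are cleaner here: the $1$-dimensional analog of the concavity condition \eqref{efbetween} is automatic, since the convex hull of two intervals in $\mathbb R$ is their spanning interval, so $\lmpt{C_1}\le\lmpt{C_2}$ and $\rmpt{C_2}\le\rmpt{C_3}$ immediately force $C_2\subseteq \rhullo 2(C_1\cup C_3)$. Hence the $1$-dimensional analog of Lemma~\ref{mirlemma} applies and shows that $\Mir{\!\bigl(\lat{\pair F{\chullo F}}\bigr)}$ is covered by two chains; together with meet-distributivity (from the $1$-dimensional analogs of Proposition~\ref{prop1} and Lemma~\ref{geomversuslat}), Lemma~\ref{slimgeomlemma} then yields convex dimension at most $2$.

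For the harder direction, I would start from a finite convex geometry $\pair U\Phi$ of convex dimension at most $2$, apply Theorem~\ref{thmmain}(B) to obtain a finite, \emph{separated}, concave set $F$ of collinear $2$-dimensional circles with $\pair U\Phi\cong \pair F{\chullo F}$, and define $\tau\colon F\to F'$ by $\tau(C)=\pair{\lmpt C}{\rmpt C}$. Separatedness of $F$ guarantees that $\tau$ is a bijection onto a finite set $F'\subseteq \set{\pair ab: a\le b}$. The goal is then to verify
\[
C\in \chulla F X\iff \tau(C)\in \chulla{F'}{\tau(X)}\qquad \text{for all } X\subseteq F\text{ and }C\in F,
\]
which at once gives $\pair U\Phi\cong \pair{F'}{\chullo{F'}}$.

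The main obstacle is exactly this last equivalence, because the two convex hulls live in different worlds: $\rhullo 2(\points X)$ is a genuinely two-dimensional region in the plane, whereas $\rhullo 2(\points{\tau X})$ degenerates to an interval of the $x$-axis. The ``$\Rightarrow$'' direction goes by projection to the $x$-axis: if $C\subseteq\rhullo 2(\points X)$, then the $x$-projection of $C$, which is $\tau(C)$, lies in the $x$-projection of $\rhullo 2(\points X)$, which is exactly the interval $\rhullo 2(\points{\tau X})$. The ``$\Leftarrow$'' direction is where concavity pays off: if $\tau(C)\subseteq \rhullo 2(\points{\tau X})$, pick $A,B\in X$ realising $\lmpt A=\min_{D\in X}\lmpt D$ and $\rmpt B=\max_{D\in X}\rmpt D$; then $\lmpt A\le\lmpt C$ and $\rmpt C\le\rmpt B$, so condition \eqref{efptetween} yields $C\subseteq \rhullo 2(A\cup B)\subseteq \rhullo 2(\points X)$, with the degenerate case $A=B$ covered by the fact that a circle nested inside $A$ already lies in $\rhullo 2(A)$.
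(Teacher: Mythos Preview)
Your proof is correct and follows essentially the same route as the paper: for the harder direction you invoke Theorem~\ref{thmmain}(B), project each collinear circle to the pair $\pair{\lmpt C}{\rmpt C}$, and use concavity (via \eqref{efptetween}) to show this projection is an isomorphism of convex geometries, exactly as the paper does with its map $\phi$. You are more explicit than the paper about the easy direction (the paper dismisses it as ``trivial'' and ``already mentioned''), correctly observing that the $1$-dimensional analogue of concavity holds automatically so that the argument of Lemma~\ref{mirlemma} goes through.
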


\begin{proof} 
\refegy{Let $\pair U\Phi$ be a convex geometry of dimension at most 2.  Theorem~\ref{thmmain} yields a finite, separated, concave set $M$ of collinear circles such that $\pair U\Phi\cong\pair {M}{\chullo M}$. For $C\in M$, let $\phi(C)=\pair{\lmpt C}{\rmpt C}$. Let 
$F=\set{\phi(C): C\in M}$.
By \eqref{efbetween},  $\phi\colon \pair {M}{\chullo M}\to \pair {F}{\chullo F}$ is an isomorphism. This proves the non-trivial part; the trivial part has already been mentioned.}
\end{proof}

\refegy{As opposed to the proof above, it is far less easy to derive Theorem~\ref{thmmain} from Corollary~\ref{kiracorol}, because a finite set $F$ of 1-dimensional circles is rarely of the form $\set{\phi(C): C\in M}$ for a set $M$ of concave, collinear circles. 
}

\end{document}